\theoremstyle{plain}
\newtheorem{thm}{Theorem}[section]
\newtheorem{lem}[thm]{Lemma}
\newtheorem{prop}[thm]{Proposition} 
\newtheorem{cor}[thm]{Corollary}
\newtheorem{con}[thm]{Conjecture}
\theoremstyle{definition}
\newtheorem{rem}[thm]{Remark}
\newtheorem{ex}[thm]{Example}
\def\QQ{{\mathbb Q}}
\def\CC{{\mathbb C}}
\def\ZZ{{\mathbb Z}}
\def\L{{\mathcal{L}}}
\def\epsilon{{\varepsilon}}
\def\phi{{\varphi}}
\title{Computing $\L$-invariants via the Greenberg-Stevens formula}
\author{Samuele Anni, Gebhard Böckle, Peter Mathias Gräf, \'{A}lvaro Troya}
\address{University of Luxembourg, RMATH, 6 avenue de la Fonte, 4364 Esch-sur-Alzette, Luxembourg}
\email{samuele.anni@gmail.com}
\address{Universität Heidelberg, IWR, Im Neuenheimer Feld 205, 69120 Heidelberg, Germany}
\email{gebhard.boeckle@iwr.uni-heidelberg.de} 
\email{peter.graef@iwr.uni-heidelberg.de}
\email{troya@stud.uni-heidelberg.de}
\date{\today}
\subjclass[2010]{Primary: 11F03, 11F85 Secondary: 11F67, 11F33}
\begin{document}

\selectlanguage{english}
\maketitle

\vspace{-3em}%it looks like there is a lot of white space otherwise
\begin{abstract}
In this article, we describe how to compute slopes of $p$-adic $\L$-invariants of arbitrary weight and level by means of the Greenberg-Stevens formula. Our method is based on the work of Lauder and Vonk on computing the reverse characteristic series of the $U_p$-operator on overconvergent modular forms. Using higher derivatives of this characteristic series, we construct a polynomial whose zeros are precisely the $\L$-invariants appearing in the corresponding space of modular forms with fixed sign of the Atkin-Lehner involution at $p$. In addition, we describe how to compute this polynomial efficiently. In the final section, we give computational evidence for relations between slopes of $\L$-invariants for small primes.
\end{abstract}

\section{Introduction}

Let $f$ be a newform of even weight $k\geq 2$ for $\Gamma_0(pN)$, where $p$ is prime and $(N,p)=1$. Let $\chi$ be a Dirichlet character of conductor prime to $pN$ with $\chi(p)= 1$. By the work of Mazur and Swinnerton-Dyer, see for example \cite{mtt}, there exists a $p$-adic {$L$-function} $L_p(f,\chi,s)$ attached to $f$ that interpolates the algebraic parts $L^{\text{alg}}(f,\chi,j)$ for $j\in\{1,\dots,k-1\}$ of the special values of the classical $L$-function attached to $f$. The $p$-adic $\mathcal{L}$-invariant $\mathcal{L}_p(f)\in\CC_p$ attached to $f$ satisfies 
\[
L'_p(f,\chi,\tfrac{k}{2})=\mathcal{L}_p(f)\cdot L^{\mathrm{alg}}(f,\chi,\tfrac{k}{2})
\]
and it depends only on the local Galois representation attached to $f$. Mazur, Tate and Teitelbaum conjectured that such invariant should exist. Afterwards, several possible candidates were proposed. The fundamental breakthrough in relating these candidates to one another and proving the above formula, due to Greenberg and Stevens \cite{gs}, is the connection between $\mathcal{L}_p(f)$ and the (essentially unique) $p$-adic family of eigenforms passing through $f$. If we denote this family by 
\[
f_\kappa=\sum_{n=1}^\infty a_n(\kappa)q^n \quad \text{with} \quad f_k=f,
\]
where the coefficients $a_n(\kappa)$ are rigid analytic functions on a disc containing $k$ in the weight space $\mathrm{Hom}_{\mathrm{cont}}(\ZZ_p^\times,\mathbb{G}_m)$, then the relation is given as
\[
\mathcal{L}_p(f)=-2\operatorname{\mathrm{dlog}}(a_p(\kappa))|_{\kappa=k}.
\]
A more detailed account of the history of $\mathcal{L}$-invariants can be found in \cite{col10}.

Recently, there has been a lot of interest in computing $p$-adic $\mathcal{L}$-invariants. In \cite{pg}, the third author conjectured some relations between (slopes of) $\mathcal{L}$-invariants of different levels and weights for $p=2$. In \cite{berg}, Bergdall explains relations between slopes of $\mathcal{L}$-invariants and the \emph{size} of the $p$-adic family passing through the given newform.
On a different note, a main motivation to understand and compute $\mathcal{L}$-invariants arises from the study of coefficient fields of classical newforms. In particular, Buzzard asked whether there exists a bound $B_{N,p}$ for all Hecke eigenforms $f$ of level $\Gamma_1(N)\cap \Gamma_0(p)$ and any weight $k\geq 2$ such that $[\QQ_{f,p} : \QQ_p]\leq B_{N,p}$, where $\QQ_{f,p}$ is the coefficient field of $f$ completed at a prime dividing $p$, see \cite[Question 4.4]{questions_buzzard}. 
For $N = 1$ and $p \leq 7$ Chenevier has shown (\cite[Corollaire~p.3]{Chenevier}) that $\QQ_{f,p}$ is either 
$\QQ_p(a_p(f))$ if $f$ is old at $p$, or $\QQ_p (\mathcal{L}_p(f))$ if $f$ is new at $p$, where $a_p(f)$ is the $p$-th coefficient of the $q$-expansion of $f$ at infinity. Therefore, Buzzard's bound $B_{N,p}$ would constrain the degrees of $\mathcal{L}_p(f)$ and $a_p(f)$ over $\QQ_p$ for all $f$ as long as $N$ and $p$ are fixed. 

In this article, we describe a procedure for computing $\mathcal{L}$-invariants via the Greenberg-Stevens formula, building on the work of Lauder and Vonk on  the action of the $U_p$-operator on  overconvergent modular forms. In \cite[Section 3.3.4]{lau} Lauder describes how to compute the $\mathcal{L}$-invariant from the first derivative of the (inverse)  characteristic series of the $U_p$-operator if there is a unique split multiplicative cusp form in the given weight, using a formula of Coleman, Stevens and Teitelbaum \cite{cst}. The main aim of this article is to extend this method to weights $k$ in which there is more than one split multiplicative cusp form. For this purpose, we explain how to efficiently compute higher derivatives of the characteristic series and how the $\mathcal{L}$-invariants can be read off from these derivatives. In order for this approach to work and not increase too much the precision required to perform the computations, one first needs to decompose the space of $p$-newforms by their Atkin-Lehner eigenvalue at $p$. Since the needed precision grows with the dimension of these subspaces, we give explicit dimension formulas, analogous to the ones presented in \cite{mar}.\par
Let us remark that the procedure presented in \cite{pg} computes more data than the present paper, as  the $\mathcal{L}$-invariants in \cite{pg} are realized as the eigenvalues of  a "Hecke operator", called the $\mathcal{L}$-operator, whose matrix is computed explicitly. However, the procedure is naturally more restricted, since it involves the Jacquet\--Langlands correspondence and, therefore, it needs an auxiliary prime in the level. Another approach (unpublished) due to R. Pollack is to compute the $\mathcal{L}$-invariants directly in terms of $p$-adic and classical $L$-values via modular symbols.  Yet another approach is due to X. Guitart and M. Masdeu, see \cite{guimas}.

The layout is as follows. In Section~\ref{sec2} we recall briefly Coleman classicality and the main result of \cite{lau}, which are going to be used in several central parts of this article. In Section~\ref{sec3}, we describe how to generalize a formula of Coleman, Stevens and Teitelbaum \cite{cst} proving the existence of a polynomial $\mathcal{Q}_{p,k}$, built from higher derivatives of the reverse characteristic series of the $U_p$ operator on the space of overconvergent $p$-adic cusp forms of level $\Gamma_0(pN)$, and whose roots are precisely the $\mathcal{L}$-invariants of level $\Gamma_0(pN)$. In Sections~\ref{sec4} and~\ref{sec5} we show how to compute this polynomial extending a method of Lauder \cite{lau} and we give dimension formulae for the relevant spaces of classical modular forms.
In Section~\ref{sec6} we present the data collected together with observations on the slopes.

All computations have been done using a \texttt{magma} implementation of the algorithms. The code relies on algorithms of Lauder and Vonk and is available on request.

\proof[Acknowledgements]
This work was partially supported by the DFG Forschergruppe 1920, the DFG Priority Program SPP 1489 and the Luxembourg FNR. 
We would also thank Alan Lauder, Robert Pollack and Jan Vonk for helpful remarks and suggestions. Finally, we would like to thank John Cremona for providing access to the servers of the Number Theory Group at the Warwick Mathematics Institute.

\section{Classical and overconvergent modular forms}
\label{sec2}
Throughout this article, let $p$ be prime and let $N$ be a positive integer coprime to $p$. Let $\mathcal{W}$ denote the even $p$-adic weight space. Thus, the $\CC_p$-points of $\mathcal{W}$ are continuous characters $\kappa\colon\ZZ_p^\times\rightarrow\CC_p^\times$ with $\kappa(-1)=1$. For each $\kappa\in\mathcal{W}$, let $\mathcal{S}^\dagger_\kappa(\Gamma_0(pN))$ denote the space of overconvergent $p$-adic cusp forms of weight $\kappa$, see \cite{cole}. This space is equipped with the action of the compact operator $U_p$. 
For each integer $k$ we realize the space of classical cuspforms $\mathcal{S}_k(\Gamma_0(pN))$ as a $U_p$-stable subspace of $\mathcal{S}^\dagger_k(\Gamma_0(pN))$ by considering the associated weight $z\mapsto z^k$ in $\mathcal{W}$. Let
\[
\begin{aligned}
&P(k,t):=\det(1-t\, U_p\mid \mathcal{S}_k(\Gamma_0(pN))),\\
&P^\dagger(\kappa,t):=\det(1-t\,U_p\mid \mathcal{S}^\dagger_\kappa(\Gamma_0(pN))),
\end{aligned}
\]
be the reverse characteristic polynomial (respectively series) of $U_p$. We denote by $\mathcal{O}(\mathcal{W})\{\!\{ t\}\!\}$ the ring of entire functions with coefficients in $\mathcal{O}(\mathcal{W})$. Then, if we write 
\[
P^\dagger(\kappa,t)=1+\sum_{n=1}^\infty b_n(\kappa)t^n\in \mathcal{O}(\mathcal{W})\{\!\{ t\}\!\},
\]
by \cite[Appendix I]{cole} each function $\kappa\mapsto b_n(\kappa)$ is defined by a power series with coefficients in $\ZZ_p$. More precisely, if we write $\mathcal{W}=\bigsqcup_\epsilon\mathcal{W}_\epsilon$, where the union is formed over the even characters $\epsilon\colon (\ZZ/2p\ZZ)^\times\rightarrow\CC_p^\times$, then we have $\kappa\in\mathcal{W}_\epsilon$ if and only if the restriction of $\kappa$ to the torsion subgroup of $\ZZ_p^\times$ is $\epsilon$. For a fixed topological generator $\gamma$ of $1+2p\ZZ_p$, each $\mathcal{W}_\epsilon$ is an open $p$-adic unit disc with coordinate $w_\kappa=\kappa(\gamma)-1$. Then, for each such $\epsilon$, there exists 
\[
P^{\dagger,\epsilon}(w,t)=1+\sum_{n=1}^\infty b_n^\epsilon (w)t^n\in\ZZ_p[\![w]\!]\{\!\{t\}\!\}
\]
such that $P^\dagger(\kappa,t)=P^{\dagger,\epsilon}(w_\kappa,t)$ for $\kappa\in\mathcal{W}_\epsilon$. In the sequel, we surpress $\epsilon$ in the notation. The following theorem, due to Coleman, links $P(k,t)$ and $P^\dagger(k,t)$.  For a detailed account in the cuspidal situation, see \cite[Section 2.1.2]{bela}.

\begin{thm}[{Coleman classicality, \cite[Theorem 6.1]{class}}]\label{classicality}
Let $\alpha<k-1$. Then the roots of $P^\dagger(k,\cdot)$ of valuation $\alpha$ are precisely the roots of $P(k,\cdot)$ of valuation $\alpha$. Consequently, if $m\leq k-1$ then
\[
P(k,t)\equiv P^\dagger(k,t) \mod p^m.
\]
\end{thm}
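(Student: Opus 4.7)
My plan is to reduce to Coleman's small-slope classicality theorem for individual overconvergent eigenforms, namely that every $f \in \mathcal{S}^\dagger_k(\Gamma_0(pN))$ with $U_p f = \alpha f$ and $v_p(\alpha) < k-1$ is classical. This eigenform statement can then be promoted to the stated identity of roots of characteristic series and, consequently, to the congruence.

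First, I would invoke Serre's spectral theory of completely continuous operators on $p$-adic Banach spaces. Since $U_p$ acts compactly on $\mathcal{S}^\dagger_k(\Gamma_0(pN))$, there is a canonical $U_p$-stable slope decomposition
\[
\mathcal{S}^\dagger_k(\Gamma_0(pN)) = V^{<k-1} \oplus V^{\geq k-1},
\]
in which $V^{<k-1}$ is the finite-dimensional sum of generalized eigenspaces for eigenvalues $\alpha$ with $v_p(\alpha) < k-1$, accounting exactly for the factor of $P^\dagger(k,t)$ coming from these low slopes. Applying Coleman's classicality to each generalized eigenvector in $V^{<k-1}$, one obtains $V^{<k-1} \subseteq \mathcal{S}_k(\Gamma_0(pN))$. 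Since $\mathcal{S}_k(\Gamma_0(pN))$ is itself $U_p$-stable and admits its own slope decomposition, the inclusion becomes an equality of low-slope parts. Comparing the corresponding factorizations of $P(k,t)$ and $P^\dagger(k,t)$ proves the first assertion.

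For the congruence I would write
\[
P^\dagger(k,t) = P_{<}(t)\cdot P^\dagger_{\geq}(t), \qquad P(k,t) = P_{<}(t)\cdot P_{\geq}(t),
\]
where $P_{<}$ is the common low-slope factor, and the remaining factors only involve eigenvalues of valuation $\geq k-1$. For $n\geq 1$, the coefficient of $t^n$ in either $P^\dagger_{\geq}$ or $P_{\geq}$ is, up to sign, an $n$-th elementary symmetric function in such eigenvalues, hence has $p$-adic valuation at least $n(k-1) \geq k-1 \geq m$. Thus $P^\dagger_{\geq} \equiv P_{\geq} \equiv 1 \pmod{p^m}$, and the desired congruence $P(k,t) \equiv P^\dagger(k,t) \pmod{p^m}$ follows by multiplying with $P_{<}(t)$.

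The main obstacle is Coleman's classicality itself, which rests on a delicate rigid analytic analysis: a controlled extension of small-slope overconvergent forms across the supersingular locus via the theory of the canonical subgroup, together with a cohomological interpretation of the small-slope condition. Once this deep input is available as cited, the rest of the argument is a formal consequence combining Serre's Fredholm theory with an elementary Newton polygon estimate on symmetric functions.
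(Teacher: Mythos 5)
The paper offers no internal proof of this statement: it is quoted directly from Coleman (\cite[Theorem 6.1]{class}), with \cite[Section 2.1.2]{bela} cited for the cuspidal account, so your write-up is the standard unwinding of that citation rather than an alternative to an argument in the paper. Your architecture is the right one and matches what the references do: Serre--Coleman Fredholm theory gives the slope-$<k-1$ Riesz decomposition and the corresponding factorization of $P^\dagger(k,t)$, classicality identifies the low-slope parts of the classical and overconvergent spaces, and the congruence follows from the Newton-polygon estimate that the $t^n$-coefficients ($n\geq 1$) of the slope-$\geq k-1$ factors have valuation at least $n(k-1)\geq m$, so both series are $\equiv P_<(t) \bmod p^m$ (using that $P_<$ has integral coefficients). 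The one point to repair is your reduction step: the eigenform-level statement you take as input (``every overconvergent $U_p$-eigenform of slope $<k-1$ is classical'') does \emph{not} by itself justify ``applying Coleman's classicality to each generalized eigenvector'': knowing that genuine eigenvectors in $V^{<k-1}$ are classical does not force the generalized eigenspaces, and hence the multiplicities in $P_<$, to agree, and multiplicities are essential here, since a discrepancy at an eigenvalue of valuation $<k-1$ would destroy the congruence for $m$ close to $k-1$. The fix is simply to quote Coleman's Theorem~6.1 in its actual form, which concerns the full slope-$<k-1$ subspace cut out by the Riesz decomposition (equivalently, all generalized eigenvectors of small slope), so that $V^{<k-1}\subseteq\mathcal{S}_k(\Gamma_0(pN))$ is exactly the cited input; with that reading your argument is complete. (Also note the statement's ``roots of valuation $\alpha$'' should be read, as you implicitly do, in terms of slopes of the reciprocal roots, i.e.\ of the $U_p$-eigenvalues.)
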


\begin{rem}
\label{algo}
For $p\geq 5$, the algorithm of Lauder presented in \cite{lau} computes $P^\dagger(k,t) \mod p^m$ for given $k$ and $m$  and runs in polynomial time with respect to $p, N$ and $m$, and linear time in $\log(k)$. This algorithm was extended by Vonk in \cite{vonk} to include the primes $p=2,3$. These algorithms give us the input for our subsequent computations. 
\end{rem}

\section{$\mathcal{L}$-invariants and derivatives of the characteristic series of $U_p$}
\label{sec3}
For a fixed positive even integer $k$, in \cite{cst} Coleman, Stevens and Teitelbaum showed that whenever there is a unique split multiplicative cusp form $f$ in $S_{k}(\Gamma_0(pN))$, it is possible to compute its $\mathcal{L}$-invariant as   
$$\L_p(f) = -2a_p(f)\dfrac{\partial_\kappa P^\dagger(k,t)}{\partial_t P^\dagger(k,t)}\bigg\vert_{t=a_p(f)^{-1}}$$
where $a_p(f)$ is the $p$\--th coefficient of the $q$-expansion of the newform. Clearly, the formula above works only in the special case where the space of split multiplicative forms in $S_{k}(\Gamma_0(pN))$ is $1$-dimensional, as otherwise the denominator has a zero at $t=a_p(f)^{-1}$.  In \cite{lau}, Lauder uses the formula in this special case to compute the $\mathcal{L}$-invariant. 

The aim of this section is to remove the restriction on the dimension. If we denote by $\mathcal{S}_k(\Gamma_0(pN))^{p-\mathrm{new}}$ the subspace of cuspforms which are new at $p$, then our approach can be described as follows. First we split $\mathcal{S}_k(\Gamma_0(pN))^{p-\mathrm{new}}$  into eigenspaces for the Atkin-Lehner operator at $p$. Then use higher derivatives to distinguish among the zeros of $P^\dagger(k,t)$ which correspond to different eigenforms, i.e. to separate the $p$\--adic families passing through the different eigenforms.\par
\vspace{\baselineskip}
We begin by recalling the definition of $p$-adic $\mathcal{L}$-invariants. Note that for each $f\in \mathcal{S}_k(\Gamma_0(pN))^{p-\mathrm{new}}$, we have
\[
U_pf=-p^{(k-2)/2}W_pf,
\]
where $W_p$ denotes the Atkin-Lehner involution acting on $\mathcal{S}_k(\Gamma_0(pN))^{p-\mathrm{new}}$. Let $f\in\mathcal{S}_k(\Gamma_0(pN))^{p-\mathrm{new}}$ be an eigenform for the Hecke operators away from $N$. Then $f$ has an associated  $\mathcal{L}$-invariant $\mathcal{L}_p(f)\in\CC_p$. By a generalization of the Greenberg-Stevens formula, see \cite{col10}, $\mathcal{L}_p(f)$ is given as follows. The eigensystem (away from $N$) attached to $f$ defines a classical point on the eigencurve $\mathcal{C}_N$ of level $\Gamma_0(p)\cap\Gamma_1(N)$, see \cite{buz}. There is a $p$-adic Coleman family $f_\kappa=\sum_{n=1}^\infty a_n(\kappa)q^n$ through $f$ (i.e. an irreducible component of a small affinoid neighbourhood of the point attached to $f$ in $\mathcal{C}_N$ that maps isomorphically onto an open affinoid subdomain of the weight space) of constant slope since the weight map is \'etale at the point attached to $f$, see \cite[Proposition 2.6]{berg}. In particular, via this $p$-adic family, the rigid morphism $a_p(\kappa)$ on $\mathcal{C}_N$ defines a morphism on the affinoid subdomain of the weight space. The $\mathcal{L}$-invariant attached to $f$ is then given by
\[
\mathcal{L}_p(f)= -2\operatorname{\mathrm{dlog}}(a_p(\kappa))|_{\kappa=k}.
\]
Let us point out that, for $\kappa$ sufficiently close to $k$, the eigenvalue $a_p(\kappa)$ in level $\Gamma_0(p)\cap\Gamma_1(N)$ appears in fact in level $\Gamma_0(pN)$; i.e. on the space $\mathcal{S}_\kappa^\dagger(\Gamma_0(pN))$. The expositions in \cite{berg} and \cite{bela} provide a nice summary of the constructions that are relevant in our setting.

\smallskip

Before we can study the relation between $\mathcal{L}$-invariants and the characteristic series of $U_p$, we need some preparations.

\begin{lem}
\label{eigen}
Let $\epsilon$ be an eigenvalue of the Atkin-Lehner operator $W_p$ acting on $\mathcal{S}_k(\Gamma_0(pN))$. Let $a^\epsilon_{p,k}=-\epsilon p^{(k-2)/2}$. Then 
$$\mathcal{S}_k(\Gamma_0(pN))^{p-\mathrm{new}, \epsilon}=\mathcal{S}_k(\Gamma_0(pN))^{U_p=a^\epsilon_{p,k}},$$
where $\mathcal{S}_k(\Gamma_0(pN))^{p-\mathrm{new},\epsilon}$ denotes the subspace of $\mathcal{S}_k(\Gamma_0(pN))^{p-\mathrm{new}}$
on which $W_p$ has eigenvalue $\epsilon$, and $\mathcal{S}_k(\Gamma_0(pN))^{U_p=a^\epsilon_{p,k}}$ is the subspace of $\mathcal{S}_k(\Gamma_0(pN))$ where the $U_p$ operator acts as multiplication by $a^\epsilon_{p,k}$.
\end{lem}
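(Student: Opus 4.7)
The plan is to prove the two inclusions separately, exploiting that on the $p$-new subspace the Atkin-Lehner involution is tied to $U_p$ by the identity $U_p = -p^{(k-2)/2} W_p$ stated just before the lemma, while on the $p$-old subspace one invokes the Ramanujan-Petersson bound to exclude the relevant $U_p$-eigenvalue.

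For the inclusion $\mathcal{S}_k(\Gamma_0(pN))^{p\text{-new},\epsilon} \subseteq \mathcal{S}_k(\Gamma_0(pN))^{U_p = a^\epsilon_{p,k}}$, the argument is immediate: if $W_p f = \epsilon f$ and $f$ is $p$-new, then
\[
U_p f = -p^{(k-2)/2} W_p f = -\epsilon p^{(k-2)/2} f = a^\epsilon_{p,k} f.
\]

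For the reverse inclusion, suppose $U_p f = a^\epsilon_{p,k} f$, and write $f = f_{\text{new}} + f_{\text{old}}$ according to the Atkin-Lehner decomposition $\mathcal{S}_k(\Gamma_0(pN)) = \mathcal{S}_k(\Gamma_0(pN))^{p\text{-new}} \oplus \mathcal{S}_k(\Gamma_0(pN))^{p\text{-old}}$, which is stable under both $U_p$ and $W_p$. The first step is to show $f_{\text{old}} = 0$. Since the old subspace is the direct sum over newforms $g$ of level $N$ of the two-dimensional planes spanned by $g(z)$ and $g(pz)$, it suffices to prove that $a^\epsilon_{p,k}$ is not a $U_p$-eigenvalue on any such plane. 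On such a plane the two $U_p$-eigenvalues $\alpha, \beta$ are the roots of $X^2 - a_p(g) X + p^{k-1}$. If one of them equals $a^\epsilon_{p,k} = -\epsilon p^{(k-2)/2}$, the other equals $-\epsilon^{-1} p^{k/2}$, and hence
\[
|a_p(g)|_\infty = \bigl|p^{(k-2)/2} + p^{k/2}\bigr| = p^{(k-1)/2}(p^{-1/2} + p^{1/2}) > 2p^{(k-1)/2},
\]
contradicting the Deligne (Ramanujan-Petersson) bound $|a_p(g)|_\infty \leq 2 p^{(k-1)/2}$ for a newform of level $N$ coprime to $p$. Hence $f_{\text{old}}=0$.

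It remains to show that $f = f_{\text{new}}$ has $W_p$-eigenvalue $\epsilon$. Since $W_p$ is an involution on the $p$-new subspace, decompose $f = f^+ + f^-$ into its $\pm 1$ eigencomponents for $W_p$. Using $U_p = -p^{(k-2)/2} W_p$ on the $p$-new part gives $U_p f^{\pm} = \mp p^{(k-2)/2} f^{\pm}$, and comparing with $U_p f = a^\epsilon_{p,k} f$ forces the component of the wrong sign to vanish, so $f$ lies in the $\epsilon$-eigenspace of $W_p$. The main (and essentially only) obstacle is the exclusion of the $p$-old contribution, which I expect to handle cleanly via the Ramanujan bound; everything else is a direct manipulation of the given identity $U_p = -p^{(k-2)/2} W_p$.
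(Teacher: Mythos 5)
Your proof is correct and follows essentially the same route as the paper: the inclusion $\mathcal{S}_k(\Gamma_0(pN))^{p\text{-new},\epsilon}\subseteq\mathcal{S}_k(\Gamma_0(pN))^{U_p=a^\epsilon_{p,k}}$ is immediate from $U_p=-p^{(k-2)/2}W_p$ on the $p$-new part, and the $p$-old part is excluded via Deligne's bound applied to the roots of $x^2-a_p(g)x+p^{k-1}$. The only cosmetic difference is that you reach a contradiction from $|a_p(g)|_\infty\le 2p^{(k-1)/2}$, whereas the paper observes directly that these roots have complex absolute value $p^{(k-1)/2}$, hence cannot equal $a^\epsilon_{p,k}$, which has absolute value $p^{(k-2)/2}$.
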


\proof By definition $\mathcal{S}_k(\Gamma_0(pN))^{p-\mathrm{new}, \epsilon} \subseteq \mathcal{S}_k(\Gamma_0(pN))^{U_p=a^\epsilon_{p,k}}$. In order to show %that $\mathcal{S}_k(\Gamma_0(pN))^{U_p+\epsilon p^{(k-2)/2}} \subseteq \mathcal{S}_k(\Gamma_0(pN))^{p-\mathrm{new}, \epsilon}$
the other inclusion, let us first remark that if a cuspform in $\mathcal{S}_k(\Gamma_0(pN))^{U_p=a^\epsilon_{p,k}}$ is new at $p$, then clearly it is an eigenform for $W_p$ with eigenvalue $\epsilon$. 
Let us now look at the space of $p$-oldforms $\mathcal{S}_k(\Gamma_0(pN))^{p-\mathrm{old}}$. Each eigenvalue of $T_p$ acting on $\mathcal{S}_k(\Gamma_0(N))$ gives rise to two $U_p$-eigenvalues on $\mathcal{S}_k(\Gamma_0(pN))^{p-\mathrm{old}}$ and by definition all eigenvalues of $p$-oldforms arise in this way. Let $\alpha$ be a $T_p$-eigenvalue. The corresponding $U_p$-eigenvalues are the roots of $x^2-\alpha x+p^{k-1}$. Now, by a deep theorem of Deligne, these $U_p$-eigenvalues have complex absolute value $p^{(k-1)/2}$, so they cannot be $a^\epsilon_{p,k}$.
\endproof

Let us remark  that $\mathcal{S}_k(\Gamma_0(pN))^{p-\mathrm{new}}$ has a basis of eigenforms for the Hecke operators away from $N$. These split into two orbits with respect to the eigenvalues of the Atkin-Lehner involution. 

From now on we fix an eigenvalue $\epsilon$ of $W_p$ and set  $d_\epsilon=\dim \mathcal{S}_k(\Gamma_0(pN))^{p-\mathrm{new}, \epsilon}$. 
Let $f_1,\dots,f_{d_\epsilon}$ denote the basis of $\mathcal{S}_k(\Gamma_0(pN))^{p-\mathrm{new}, \epsilon}$. For each $i\in\{1,\dots,d_\epsilon\}$, we denote by $a_i(\kappa)$ the $p$-th coefficient of the Coleman family passing through $f_i$ as above.
\begin{prop}
\label{order_vanishing}
Let $\epsilon$ be an eigenvalue of the Atkin-Lehner operator $W_p$ acting on $\mathcal{S}_k(\Gamma_0(pN))^{p-\mathrm{new}}$, then 
$$P^\dagger(k,t)=(1-a^\epsilon_{p,k}t)^{d_\epsilon}C(k,t),$$ where $C(k, (a^\epsilon_{p,k})^{-1})\neq 0$.
\end{prop}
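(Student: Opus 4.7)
The plan is to reduce the statement about the overconvergent characteristic series to a statement about the classical characteristic polynomial via Coleman classicality (Theorem~\ref{classicality}), and then compute the multiplicity of $(1-a^\epsilon_{p,k}t)$ in $P(k,t)$ by decomposing $\mathcal{S}_k(\Gamma_0(pN))$ into its $p$-new and $p$-old parts.

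First, I would verify that Theorem~\ref{classicality} applies at $t_0 = (a^\epsilon_{p,k})^{-1}$. Since $a^\epsilon_{p,k} = -\epsilon p^{(k-2)/2}$ has $p$-adic valuation $(k-2)/2$, the root $t_0$ has valuation $-(k-2)/2$, and the inequality $-(k-2)/2 < k-1$ holds for every (positive) even $k$. Reading Coleman classicality with multiplicities, it then suffices to establish that $(1-a^\epsilon_{p,k}t)$ occurs in $P(k,t)$ with multiplicity exactly $d_\epsilon$, i.e., that the generalized $U_p$-eigenspace for $a^\epsilon_{p,k}$ in $\mathcal{S}_k(\Gamma_0(pN))$ has dimension $d_\epsilon$.

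Next, I would use the standard $U_p$-stable decomposition
\[
\mathcal{S}_k(\Gamma_0(pN)) = \mathcal{S}_k(\Gamma_0(pN))^{p-\mathrm{new}} \oplus \mathcal{S}_k(\Gamma_0(pN))^{p-\mathrm{old}}
\]
and handle the two summands separately. On the $p$-new part the relation $U_p = -p^{(k-2)/2}W_p$ and the fact that $W_p^2=1$ show that $U_p$ is diagonalizable there, so its generalized eigenspace for $a^\epsilon_{p,k}$ equals the honest eigenspace, which by Lemma~\ref{eigen} is $\mathcal{S}_k(\Gamma_0(pN))^{p-\mathrm{new},\epsilon}$, of dimension $d_\epsilon$. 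On the $p$-old part, the Deligne bound argument already used in the proof of Lemma~\ref{eigen} shows that every $U_p$-eigenvalue has complex absolute value $p^{(k-1)/2} \neq p^{(k-2)/2}$, so $a^\epsilon_{p,k}$ is not an eigenvalue and therefore contributes nothing to the generalized eigenspace. Summing the two contributions gives multiplicity exactly $d_\epsilon$.

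The only delicate points I anticipate are (i) making sure that Coleman classicality is being used with multiplicities (not just set-theoretically), which is standard and implicit in the mod $p^m$ congruence stated in Theorem~\ref{classicality}, and (ii) passing from the eigenspace statement of Lemma~\ref{eigen} to a generalized-eigenspace statement; the diagonalizability of $W_p$ (hence of $U_p$) on the $p$-new subspace handles this cleanly. Everything else is bookkeeping, after which we may write $P^\dagger(k,t) = (1-a^\epsilon_{p,k}t)^{d_\epsilon}C(k,t)$ with $C(k,(a^\epsilon_{p,k})^{-1})\neq 0$.
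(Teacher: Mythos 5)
Your proof is correct and follows essentially the same route as the paper: reduce to the classical polynomial $P(k,t)$ via Coleman classicality (the slope $(k-2)/2$ of $a^\epsilon_{p,k}$ is indeed $<k-1$) and then read off the multiplicity from Lemma~\ref{eigen} together with the Deligne bound on the $p$-old part. The only difference is that you spell out the points the paper leaves implicit, namely that classicality is used with multiplicities and that the generalized $U_p$-eigenspace on the $p$-new part coincides with the honest one because $W_p$ is an involution, which is exactly the right bookkeeping.
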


\proof
For $\alpha <k-1$, by Coleman classicality, Theorem~\ref{classicality}, the set of roots of $P^\dagger(k,t)$ of slope $\alpha$ is the set of roots of $P(k,t)$ of slope $\alpha$. Therefore, Lemma~\ref{eigen} implies the claim.\endproof

\begin{prop}
\label{factorweight}
There exists a small neighbourhood $V\subset\mathcal{W}$ of $k$ such that in $\mathcal{O}(V)\{\!\{ t\}\!\}$ we have
$$P^\dagger(\kappa,t)=\prod_{i=1}^{d_\epsilon}(1-a_i(\kappa)t) \cdot C(\kappa,t).$$
where the function $C(\kappa,t)$ specializes to the corresponding function in Proposition~\ref{order_vanishing} for $\kappa=k$.
\end{prop}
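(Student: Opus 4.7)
The plan is to combine a Hensel-type factorization theorem for Fredholm series in rigid-analytic families with the étaleness of the weight map at the classical points attached to $f_1, \ldots, f_{d_\epsilon}$.

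First, by Proposition~\ref{order_vanishing} the entire function $t \mapsto P^\dagger(k, t)$ has $t = (a^\epsilon_{p,k})^{-1}$ as a zero of multiplicity exactly $d_\epsilon$, while $C(k, t)$ is non-vanishing at this point. The factorization theorem for parametrized Fredholm determinants (see \cite[Appendix I]{cole} and the formalism of \cite{buz}) then yields: after shrinking to a small affinoid neighbourhood $V \subset \mathcal{W}$ of $k$, there is a unique factorization
\[
P^\dagger(\kappa, t) = Q(\kappa, t) \cdot C(\kappa, t)
\]
in $\mathcal{O}(V)\{\!\{ t\}\!\}$, where $Q(\kappa, t) \in \mathcal{O}(V)[t]$ is a polynomial of degree $d_\epsilon$ in $t$ with constant term $1$, $C(\kappa, t)$ is a Fredholm series whose $t$-zero set is disjoint from a small $t$-disc around $(a^\epsilon_{p,k})^{-1}$, and the specialization at $\kappa = k$ recovers the factorization of Proposition~\ref{order_vanishing}.

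Second, by \cite[Proposition~2.6]{berg} the weight map $\mathcal{C}_N \to \mathcal{W}$ is étale at each of the $d_\epsilon$ classical points attached to the eigenforms $f_1, \ldots, f_{d_\epsilon}$. After shrinking $V$ further if necessary, this produces $d_\epsilon$ disjoint sections of $\mathcal{C}_N \to V$ extending those classical points, corresponding to pairwise distinct Coleman families passing through the $f_i$. The $p$-th Hecke eigenvalues of these families are rigid analytic functions $a_i(\kappa) \in \mathcal{O}(V)$ satisfying $a_i(k) = a^\epsilon_{p,k}$.

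Third, identify the two pieces. The reciprocal roots in $t$ of $Q(\kappa, t)$ are precisely those $U_p$-eigenvalues on $\mathcal{S}^\dagger_\kappa(\Gamma_0(pN))$ which specialize to $a^\epsilon_{p,k}$ at $\kappa = k$; by the étaleness of the previous step these are exactly $a_1(\kappa), \ldots, a_{d_\epsilon}(\kappa)$, counted with multiplicity since the $d_\epsilon$ families are pairwise distinct. Hence
\[
Q(\kappa, t) = \prod_{i=1}^{d_\epsilon}\bigl(1 - a_i(\kappa) t\bigr),
\]
and substituting into the first step yields the proposition, with $C(\kappa, t)$ specializing at $\kappa = k$ to the function of Proposition~\ref{order_vanishing} by construction.

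The main obstacle is the identification carried out in the third step: one has to know that the $d_\epsilon$ étale branches above $V$ account for the full $t$-multiplicity $d_\epsilon$ of $Q$ at every $\kappa \in V$, not just at $\kappa = k$. This is precisely the content of the unramifiedness of the weight map at each of the $d_\epsilon$ classical points, which ensures that the tangent directions of the $d_\epsilon$ Coleman families at weight $k$ are linearly independent and that the corresponding slope stratum of the eigencurve over $V$ is a trivial cover of degree $d_\epsilon$.
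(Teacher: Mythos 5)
Your argument is correct and follows essentially the same route as the paper: a relative Fredholm factorization of $P^\dagger(\kappa,t)$ over a small neighbourhood of $k$, the Coleman families $a_i(\kappa)\in\mathcal{O}(V)$ through the $f_i$ coming from étaleness of the weight map, and a multiplicity count anchored at $\kappa=k$ by Proposition~\ref{order_vanishing}. The only cosmetic difference is the factorization input: the paper invokes the slope-$\leq k-1$ factorization of \cite[Proposition II.1.12]{belacourse} and then splits $\prod_{i}(1-a_i(\kappa)t)$ off the resulting polynomial factor, whereas you use a Hensel-type local factorization of degree exactly $d_\epsilon$ around $t=(a^\epsilon_{p,k})^{-1}$.
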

\begin{proof}
By  \cite[Proposition II.1.12]{belacourse}, we find a small neighbourhood $V$ of $k$ such that in $\mathcal{O}(V)\{\!\{ t\}\!\}$ we have
\[
P^\dagger(\kappa,t)=F(\kappa,t)\cdot G(\kappa,t),
\]
where $F(\kappa,t)\in\mathcal{O}(V)[t]$ whose roots are precisely the roots of $P^\dagger(\kappa,t)$ of valuation less or equal to $k-1$ (with the same multiplicities), $F(\kappa,0)=1$  and $G(\kappa,t)\in \mathcal{O}(V)\{\!\{ t\}\!\}$. After possibly shrinking $V$, the Coleman families through the $f_i$ do not intersect and $a_i(\kappa)\in\mathcal{O}(V)$. Then, by construction, these are zeros of $F(\kappa,t)$ whose multiplicities add up to $d_\epsilon$. Now factoring $F(\kappa,t)$ gives the result.
\end{proof}

In the sequel, for $\bullet \in \{\kappa, t\}$, let us denote by $\partial_\bullet^n P^\dagger(k,t)$ the $n$-th partial derivative of $P^\dagger(\kappa,t)$ with respect to $\bullet$ at the point $(k,t)$, viewed as an element in $\ZZ_p\{\!\{ t\}\!\}$.
\begin{cor} 
\label{higherder}
For $n\leq d_\epsilon$ and $\bullet \in \{\kappa, t\}$, we have
\[
\partial_\bullet^n P^\dagger(k,t)= (1-a_{p, k}^{\epsilon}t)^{d_\epsilon-n}\cdot C^{(n)}_\bullet(k,t)
\]
where
\[
C^{(n)}_\bullet(k,(a^\epsilon_{p,k})^{-1})=n\,!\cdot C(k,(a^\epsilon_{p,k})^{-1})\sum_{1\leq m_1<\dots<m_n\leq d_\epsilon}\,\prod_{j=1}^n\partial_\bullet(1-a_{m_j}(\kappa)t)\bigg\vert_{\kappa=k, t=(a^\epsilon_{p,k})^{-1}}.
\]
\end{cor}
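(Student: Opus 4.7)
The plan is to differentiate the factorization of Proposition~\ref{factorweight} term by term using the generalized Leibniz rule and then track which terms survive after extracting the announced power of $(1-a^\epsilon_{p,k}t)$.

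First, I would start from the identity
\[
P^\dagger(\kappa,t)=\prod_{i=1}^{d_\epsilon}(1-a_i(\kappa)t)\cdot C(\kappa,t)
\]
valid in $\mathcal{O}(V)\{\!\{ t\}\!\}$, and apply the multinomial product rule to get
\[
\partial_\bullet^n P^\dagger(\kappa,t)=\sum_{n_0+n_1+\cdots+n_{d_\epsilon}=n}\binom{n}{n_0,n_1,\dots,n_{d_\epsilon}}\partial_\bullet^{n_0}C(\kappa,t)\prod_{i=1}^{d_\epsilon}\partial_\bullet^{n_i}(1-a_i(\kappa)t).
\]
Next I would specialize $\kappa=k$. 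Since every $f_i$ lies in $\mathcal{S}_k(\Gamma_0(pN))^{U_p=a^\epsilon_{p,k}}$, we have $a_i(k)=a^\epsilon_{p,k}$, so any factor with $n_i=0$ contributes $(1-a^\epsilon_{p,k}t)$. In every multi-index appearing in the sum, the number of indices $i\in\{1,\dots,d_\epsilon\}$ with $n_i>0$ is at most $n$, so at least $d_\epsilon-n$ of them are zero. Therefore each summand is divisible by $(1-a^\epsilon_{p,k}t)^{d_\epsilon-n}$, and we may factor this out to obtain $\partial_\bullet^n P^\dagger(k,t)=(1-a^\epsilon_{p,k}t)^{d_\epsilon-n}\cdot C^{(n)}_\bullet(k,t)$ for some $C^{(n)}_\bullet(k,t)\in\ZZ_p\{\!\{t\}\!\}$, establishing the first assertion.

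For the value at $t=(a^\epsilon_{p,k})^{-1}$, I would observe that any multi-index with strictly more than $d_\epsilon-n$ zero entries among $n_1,\dots,n_{d_\epsilon}$ leaves at least one extra factor $(1-a^\epsilon_{p,k}t)$ in $C^{(n)}_\bullet(k,t)$, which vanishes upon substituting $t=(a^\epsilon_{p,k})^{-1}$. Hence only multi-indices with exactly $d_\epsilon-n$ zeros among $n_1,\dots,n_{d_\epsilon}$ contribute. The constraint $n_0+\sum_{i\geq 1}n_i=n$ together with exactly $n$ strictly positive $n_i$ forces $n_0=0$ and each of the surviving $n_i$ to equal $1$. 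Such multi-indices are in bijection with $n$-element subsets $\{m_1<\cdots<m_n\}\subset\{1,\dots,d_\epsilon\}$, and the corresponding multinomial coefficient is $n!$. Collecting these terms, the factor $C(k,t)$ survives undifferentiated and yields exactly the claimed formula.

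The proof is essentially combinatorial bookkeeping; the only subtlety is to argue cleanly that the additional $(1-a^\epsilon_{p,k}t)$ factors killing non-minimal multi-indices are genuine (and not absorbed by higher $\kappa$-derivatives of $a_i$), which follows immediately from the shape $\partial_\kappa^{n_i}(1-a_i(\kappa)t)=-a_i^{(n_i)}(\kappa)t$ for $n_i\geq 1$ and the fact that $(1-a^\epsilon_{p,k}t)$ is not a zero divisor in $\mathcal{O}(V)\{\!\{ t\}\!\}$. I expect no significant obstacle beyond this verification.
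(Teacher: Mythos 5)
Your argument is correct and follows essentially the same route as the paper: apply the multinomial Leibniz rule to the factorization from Proposition~\ref{factorweight}, use $a_i(k)=a^\epsilon_{p,k}$ to count the factors of $(1-a^\epsilon_{p,k}t)$ contributed by the undifferentiated linear terms, and observe that at $t=(a^\epsilon_{p,k})^{-1}$ only the multi-indices with all exponents equal to $0$ or $1$ and none landing on $C$ survive, each with multinomial coefficient $n!$. The only cosmetic difference is that the paper organizes the bookkeeping by splitting the sum into the pieces $\Sigma_{\leq 1}$ and $\Sigma_{>1}$, whereas you count zero entries directly; the content is identical.
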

\proof
By Proposition \ref{factorweight}, we have
$$P^\dagger(\kappa,t)=\prod_{i=1}^{d_\epsilon}(1-a_i(\kappa)t) \cdot C(\kappa,t)$$
on a small neighbourhood of $k$. Thus, the $n$-th partial derivative of $P^\dagger$ for $\bullet \in \{\kappa, t\}$ satisfies
$$
\partial_\bullet^n P^\dagger(\kappa,t)= \sum_{i_1+i_2+\dots+i_{d_\epsilon+1}=n}{n\choose
i_1,i_2,\dots,i_{d_\epsilon+1}}\cdot \prod_{j=1}^{d_\epsilon}\partial_\bullet^{i_j}(1-a_j(\kappa)t) \cdot \partial_\bullet^{i_{d_\epsilon+1}}C(\kappa,t).
$$
\noindent For $n\leq d_\epsilon$, let us split the sum into $\Sigma_{\leq 1}(\kappa,t)$ and $\Sigma_{>1}(\kappa,t)$ as follows:
\begin{align*}
\Sigma_{\leq 1}(\kappa,t) &= \sum_{\substack{i_1+i_2+\dots+i_{d_\epsilon+1}=n, \\ i_j \leq 1 \, \forall\, j }}{n\choose i_1,i_2,\dots,i_{d_\epsilon+1}}\cdot \prod_{j=1}^{d_\epsilon}\partial_\bullet^{i_j}(1-a_j(\kappa)t)\cdot \partial_\bullet^{i_{d_\epsilon+1}}C(\kappa,t), \\
\Sigma_{>1}(\kappa,t)  &= \sum_{\substack{i_1+i_2+\dots+i_{d_\epsilon+1}=n,\\ i_j > 1 \text{ for some } j }}{n\choose 
i_1,i_2,\dots,i_{d_\epsilon+1}}\cdot \prod_{j=1}^{d_\epsilon}\partial_\bullet^{i_j}(1-a_j(\kappa)t)\cdot \partial_\bullet^{i_{d_\epsilon+1}}C(\kappa,t).
\end{align*}
We claim that $\Sigma_{>1}(k,t) = (1-a_{p,k}^\epsilon t)^{d_\epsilon-n+1}\cdot C^{(n)}_{>1}(k,t)$ for some $C^{(n)}_{>1}(k,t)$.\\ 
If $i_\ell>1$ for some $\ell \in \{1,\dots,d_\epsilon+1\}$,  we must have $i_j=0$ for more than $d_\epsilon-n+1$ many $j$'s, since all $i_j$ are non-negative, their sum equal to $n$ and the cardinality of the range is $d_\epsilon+1$. Specializing to $\kappa=k$, this immediately implies the result.
We can proceed in a similar fashion to show that the terms with $i_{d_\epsilon+1} = 1$ in $\Sigma_{\leq 1}$ satisfy an analogous formula. Therefore, we are left with the term
$$
n\,!\cdot C(\kappa,t)\sum_{\substack{i_1+\dots+i_{d_\epsilon}=n,\\ i_j \leq 1 \, \forall\, j }}\,\prod_{j=1}^{d_\epsilon}\partial_\bullet^{i_j}(1-a_j(\kappa)t).
$$
Let us observe that after separating the terms with $i_j=0$ at $\kappa=k$ and evaluating at $t=(a^\epsilon_{p,k})^{-1}$ we obtain
\begin{align*}
n\,!\cdot C(k,t)\cdot (1-a_{p, k}^{\epsilon}t)^{d_\epsilon-n} \sum_{1\leq m_1<\dots<m_n\leq d_\epsilon}\,\prod_{j=1}^n\partial_\bullet(1-a_{m_j}(\kappa)t)\bigg\vert_{\kappa=k}.
\end{align*}
Putting these terms together completes the proof. 
\endproof

\begin{rem}
In the spirit of the above proposition, one could also analyse the behaviour of mixed derivatives with respect to $\kappa$ and $t$. However, for our applications only the above derivatives are relevant.
\end{rem}

Finally we can state the main result of this section:

\begin{thm}
\label{poly}
Let $\epsilon$ be an eigenvalue of the Atkin-Lehner operator $W_p$ acting on $\mathcal{S}_k(\Gamma_0(pN))^{p-\mathrm{new}}$, then the monic polynomial $\mathcal{Q}_{p,k}^{\epsilon}(x)=\sum_{n=0}^{d_\epsilon} c_{n} x^{d_\epsilon-n}\in \QQ_p[x]$,
where
$$c_n:=2^n (a_{p, k}^{\epsilon})^n{d_\epsilon\choose n}\dfrac{\partial^n_\kappa P^\dagger(\kappa,t)}{\partial^n_tP^\dagger(\kappa,t)}\bigg\vert_{\kappa=k,\;t=(a_{p, k}^{\epsilon})^{-1}},$$
satisfies
$$\mathcal{Q}_{p,k}^{\epsilon}(x)=\prod_{i=1}^{d_\epsilon}(x-\mathcal{L}_p(f_i)).$$
\end{thm}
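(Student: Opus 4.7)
The plan is to combine the factorization of the derivatives from Corollary~\ref{higherder} with the Greenberg--Stevens formula $\L_p(f_i)=-2\operatorname{dlog}(a_i(\kappa))|_{\kappa=k}$. A first observation is that for $n<d_\epsilon$ both $\partial^n_\kappa P^\dagger(k,t)$ and $\partial^n_t P^\dagger(k,t)$ vanish at $t=(a_{p,k}^\epsilon)^{-1}$ to the same order $d_\epsilon-n$, namely the order of the common factor $(1-a_{p,k}^\epsilon t)^{d_\epsilon-n}$ isolated in Corollary~\ref{higherder}. The ratio in the definition of $c_n$ must therefore be read as the quotient of the two nonvanishing cofactors after cancellation, i.e.
\[
\frac{\partial_\kappa^n P^\dagger(\kappa,t)}{\partial_t^n P^\dagger(\kappa,t)}\bigg|_{\kappa=k,\,t=(a^\epsilon_{p,k})^{-1}}
=\frac{C_\kappa^{(n)}(k,(a^\epsilon_{p,k})^{-1})}{C_t^{(n)}(k,(a^\epsilon_{p,k})^{-1})}.
\]
For $n=d_\epsilon$ this coincides with a genuine evaluation. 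Making this reduction rigorous is the only subtle point; everything else is an explicit computation.

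Next I would substitute the formula from Corollary~\ref{higherder} into this ratio. The common prefactor $n!\cdot C(k,(a^\epsilon_{p,k})^{-1})$ cancels, leaving a ratio of symmetric sums. Termwise we compute
\[
\partial_t(1-a_j(\kappa)t)\big|_{\kappa=k,\,t=(a^\epsilon_{p,k})^{-1}}=-a^\epsilon_{p,k},\qquad
\partial_\kappa(1-a_j(\kappa)t)\big|_{\kappa=k,\,t=(a^\epsilon_{p,k})^{-1}}=-\frac{a_j'(k)}{a^\epsilon_{p,k}},
\]
using that $a_j(k)=a_p(f_j)=a^\epsilon_{p,k}$ by Lemma~\ref{eigen}. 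The Greenberg--Stevens formula rewrites $a_j'(k)=-\tfrac{1}{2}\L_p(f_j)\,a^\epsilon_{p,k}$, so the $\kappa$-derivative simplifies to $\L_p(f_j)/2$. Taking products over $j=1,\dots,n$ and summing over ordered tuples $1\leq m_1<\dots<m_n\leq d_\epsilon$ yields
\[
C_\kappa^{(n)}(k,(a^\epsilon_{p,k})^{-1})=\frac{n!\cdot C(k,(a^\epsilon_{p,k})^{-1})}{2^n}\,e_n\bigl(\L_p(f_1),\dots,\L_p(f_{d_\epsilon})\bigr),
\]
\[
C_t^{(n)}(k,(a^\epsilon_{p,k})^{-1})=n!\cdot C(k,(a^\epsilon_{p,k})^{-1})\cdot\binom{d_\epsilon}{n}(-a^\epsilon_{p,k})^n,
\]
where $e_n$ denotes the $n$-th elementary symmetric polynomial.

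Forming the ratio and multiplying by the normalization $2^n(a^\epsilon_{p,k})^n\binom{d_\epsilon}{n}$ from the definition of $c_n$, all powers of $a^\epsilon_{p,k}$, the binomial coefficients and the factors of $2^n$ cancel, and we obtain
\[
c_n=(-1)^n\, e_n\bigl(\L_p(f_1),\dots,\L_p(f_{d_\epsilon})\bigr).
\]
An application of Vieta's formulas gives $\sum_{n=0}^{d_\epsilon}c_n x^{d_\epsilon-n}=\prod_{i=1}^{d_\epsilon}(x-\L_p(f_i))$, finishing the proof. The main obstacle, as noted above, is the careful interpretation and justification of the formally indeterminate ratio $\partial_\kappa^n P^\dagger/\partial_t^n P^\dagger$ at $t=(a^\epsilon_{p,k})^{-1}$, which is resolved by the clean factorization provided by Proposition~\ref{factorweight} and Corollary~\ref{higherder}; the rest is bookkeeping of constants.
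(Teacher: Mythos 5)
Your proposal is correct and follows essentially the same route as the paper's proof: substitute the factorizations of Proposition~\ref{factorweight} and Corollary~\ref{higherder} into the ratio $\partial_\kappa^n P^\dagger/\partial_t^n P^\dagger$ at $(k,(a^\epsilon_{p,k})^{-1})$, evaluate the termwise derivatives, invoke the Greenberg--Stevens formula, and identify $c_n=(-1)^n e_n(\L_p(f_1),\dots,\L_p(f_{d_\epsilon}))$ via symmetric functions. Your explicit remark that the ratio must be interpreted as the quotient of the cofactors $C^{(n)}_\kappa/C^{(n)}_t$ after cancelling the common factor $(1-a^\epsilon_{p,k}t)^{d_\epsilon-n}$ (with $C^{(n)}_t$ nonvanishing there) is exactly the cancellation the paper performs implicitly in its first displayed line.
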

\proof
Taking the ratio of the $n$-th partial derivatives of $P^\dagger(\kappa, t)$ and applying Corollary \ref{higherder} we obtain 
\begin{align*}
\dfrac{\partial^n_\kappa P^\dagger(\kappa,t)}{\partial^n_tP^\dagger(\kappa,t)}\bigg\vert_{\kappa=k,\;t=(a_{p, k}^{\epsilon})^{-1}}
    &=\dfrac{\sum_{1\leq m_1<\dots<m_n\leq d_\epsilon}\,\prod_{j=1}^n\partial_\kappa(1-a_{m_j}(\kappa)t)}
    {\sum_{1\leq m_1<\dots<m_n\leq d_\epsilon}\,\prod_{j=1}^n\partial_t(1-a_{m_j}(\kappa)t)}\bigg\vert_{\kappa=k,\;t=(a_{p, k}^{\epsilon})^{-1}}\\
    &=\dfrac{\sum_{1\leq m_1<\dots<m_n\leq d_\epsilon}\,(-a_{p, k}^{\epsilon})^{-n}\prod_{j=1}^n a'_{m_j}(k)}
    {\sum_{1\leq m_1<\dots<m_n\leq d_\epsilon}\,(-a_{p, k}^{\epsilon})^{-n}}\\
    &={d_\epsilon \choose n}^{-1}(a_{p, k}^{\epsilon})^{-n}\sum_{1\leq m_1<\dots<m_n\leq d_\epsilon}\,\prod_{j=1}^n \dfrac{a'_{m_j}(k)}{a_{p, k}^{\epsilon}}.
\end{align*}
Multiplying both sides by $2^n (a_{p, k}^{\epsilon})^{n}{d_\epsilon\choose n}$ yields
\begin{align*}
    c_n&=2^n (a_{p, k}^{\epsilon})^{n}{d_\epsilon\choose n}\dfrac{\partial^n_\kappa P^\dagger(\kappa,t)}{\partial^n_tP^\dagger(\kappa,t)}\bigg\vert_{\kappa=k,\,t=(a_{p, k}^{\epsilon})^{-1}}\\
    &=\sum_{1\leq m_1<\dots<m_n\leq d_\epsilon}\,\prod_{j=1}^n (2\operatorname{\mathrm{dlog}}(a_{m_j}(\kappa))|_{\kappa=k})
\end{align*}
The last term is precisely the $(d_\epsilon -n)$\--th coefficient of the polynomial
$$\prod_{i=1}^{d_\epsilon}\left(x-\mathcal{L}_p(f_i))\right. , $$
which completes the proof. 
\endproof

\begin{rem}
\label{nonzero}
Note that in the above proof we used that $C_t^{(n)}(k,(a^\epsilon_{p,k})^{-1})\neq 0$. The analogous statement for $C_\kappa^{(n)}(k,(a^\epsilon_{p,k})^{-1})$ is equivalent to  all of the $\mathcal{L}$-invariants $\mathcal{L}(f_i)$ being non-zero. While all our computational results support this claim, we are not aware of a proof of this statement. However, in the special case where $f_i$ is of weight $2$ and corresponds to an elliptic curve, this follows from a deep result in transcendental number theory stating that the Tate parameter is transcendental, see \cite{transc}.
\end{rem}

\section{Dimension formulae}
\label{sec4}
In this section we show how to compute the dimension $d_\epsilon$ of the cuspform space $\mathcal{S}_k(\Gamma_0(pN))^{p-\mathrm{new}, \epsilon}$ which appears in Section~\ref{sec3}. Let us recall that $N$ is a positive integer coprime to $p$. As before, we will denote by $W_p$ the $p$-th Atkin-Lehner involution and whenever $W_p$ is an operator on a vector space $V$, we will denote by $\mathrm{tr}_{V}W_p$ its trace on $V$.

\begin{prop}
The trace of the Atkin-Lehner operator $W_p$ satisfies 
 $$\mathrm{tr}_{\mathcal{S}_k(\Gamma_0(pN))^{p-\mathrm{new}}}W_p=\mathrm{tr}_{\mathcal{S}_k(\Gamma_0(pN))}W_p.$$
\end{prop}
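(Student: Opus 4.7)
The plan is to exploit the standard presentation of the $p$-old subspace via degeneracy maps, compute the action of $W_p$ in this presentation, and show directly that the trace of $W_p$ on the $p$-old part vanishes. Combined with the $W_p$-stable direct-sum decomposition
$$\mathcal{S}_k(\Gamma_0(pN)) = \mathcal{S}_k(\Gamma_0(pN))^{p-\mathrm{old}} \oplus \mathcal{S}_k(\Gamma_0(pN))^{p-\mathrm{new}},$$
this will immediately give the identity in the proposition.

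Let $\alpha,\beta:\mathcal{S}_k(\Gamma_0(N))\to\mathcal{S}_k(\Gamma_0(pN))$ denote the two degeneracy maps $\alpha(g)(z):=g(z)$ and $\beta(g)(z):=g(pz)$, so that $\alpha\oplus\beta$ identifies $\mathcal{S}_k(\Gamma_0(N))^{\oplus 2}$ with $\mathcal{S}_k(\Gamma_0(pN))^{p-\mathrm{old}}$ (using $\gcd(p,N)=1$). The key step would be to pick an explicit matrix representative $W_p=\begin{pmatrix} pa & b \\ pNc & pd \end{pmatrix}$ of the Atkin--Lehner involution, normalized so that $pad-Nbc=1$, and compute its weight-$k$ slash action on $\alpha(g)$ for arbitrary $g\in\mathcal{S}_k(\Gamma_0(N))$. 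After substituting $w=pz$ and observing that $\begin{pmatrix} a & b \\ Nc & pd \end{pmatrix}$ lies in $\Gamma_0(N)$ (its determinant is $1$) and hence stabilizes $g$, the calculation should yield $W_p\alpha(g)=p^{k/2}\,\beta(g)$. Using $W_p^2=\mathrm{id}$ on cuspforms then forces $W_p\beta(g)=p^{-k/2}\,\alpha(g)$.

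These two formulas automatically imply that $W_p$ preserves $\mathcal{S}_k(\Gamma_0(pN))^{p-\mathrm{old}}$, and hence also its $W_p$-stable complement $\mathcal{S}_k(\Gamma_0(pN))^{p-\mathrm{new}}$. Moreover, for any basis $\{g_1,\ldots,g_m\}$ of $\mathcal{S}_k(\Gamma_0(N))$ the $2$-dimensional subspaces $V_i:=\langle\alpha(g_i),\beta(g_i)\rangle$ are themselves $W_p$-stable and their internal direct sum exhausts the $p$-old part; in each $V_i$ the matrix of $W_p$ is the anti-diagonal matrix $\begin{pmatrix} 0 & p^{-k/2} \\ p^{k/2} & 0 \end{pmatrix}$, of trace zero. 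Summing over $i$ gives $\mathrm{tr}_{\mathcal{S}_k(\Gamma_0(pN))^{p-\mathrm{old}}}W_p=0$, which combined with the splitting of the total trace across the two stable summands yields the claim. The main obstacle is the careful bookkeeping of the $(\det W_p)^{k/2}$ factor in the slash-operator computation, needed both to verify that $W_p^2$ acts as the identity on forms and to obtain the displayed constants $p^{\pm k/2}$; once this is in hand, the trace argument reduces to a single linear-algebra observation.
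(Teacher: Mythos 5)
Your proposal is correct, and its overall reduction is the same as the paper's: decompose $\mathcal{S}_k(\Gamma_0(pN))$ into the $W_p$-stable summands $\mathcal{S}_k(\Gamma_0(pN))^{p-\mathrm{new}}\oplus\mathcal{S}_k(\Gamma_0(pN))^{p-\mathrm{old}}$ and observe that the statement is equivalent to $\mathrm{tr}_{\mathcal{S}_k(\Gamma_0(pN))^{p-\mathrm{old}}}W_p=0$. The difference is in how that vanishing is obtained: the paper simply cites a lemma from the literature on Atkin--Lehner operators, whereas you prove it directly. Your computation is sound: writing $W_p=\left(\begin{smallmatrix} pa & b\\ pNc & pd\end{smallmatrix}\right)$ with $pad-Nbc=1$ and factoring it as $\left(\begin{smallmatrix} a & b\\ Nc & pd\end{smallmatrix}\right)\left(\begin{smallmatrix} p & 0\\ 0 & 1\end{smallmatrix}\right)$ with the first factor in $\Gamma_0(N)$ does give $W_p\alpha(g)=p^{k/2}\beta(g)$ for the $\det^{k/2}$-normalized slash action (the same normalization that makes $W_p^2=\mathrm{id}$, since $W_p^2\in p\,\Gamma_0(pN)$), and then $W_p\beta(g)=p^{-k/2}\alpha(g)$ follows as you say; note that even under a different normalization the operator still interchanges the images of $\alpha$ and $\beta$ up to scalars, and this anti-diagonal shape alone already forces the trace on each block, hence on the whole $p$-old space, to vanish, so the constant-bookkeeping you flag is harmless. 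Two standard inputs are used tacitly and should be acknowledged: the injectivity of $\alpha\oplus\beta$ on $\mathcal{S}_k(\Gamma_0(N))^{\oplus 2}$ (so each $V_i$ is genuinely $2$-dimensional and the $V_i$ give a direct-sum exhaustion of the $p$-old part) and the $W_p$-stability of the $p$-new subspace (e.g.\ via the Petersson product). What your route buys is a short, self-contained and transparent explanation of why the old part contributes nothing; what the paper's route buys is brevity and a reference valid in the generality it needs.
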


\proof The cuspform space $\mathcal{S}_k(\Gamma_0(pN))$ decomposes into a $p$-new and a $p$-old component 
$\mathcal{S}_k(\Gamma_0(pN))=\mathcal{S}_k(\Gamma_0(pN))^{p-\mathrm{new}}\oplus\mathcal{S}_k(\Gamma_0(pN))^{p-\mathrm{old}}$.
The statement is then equivalent to say that $\mathrm{tr}_{\mathcal{S}_k(\Gamma_0(pN))^{p-\mathrm{old}}}W_p=0$ and this follows from 
\cite[Lemma~26]{Wp}.
\endproof

In the case of $N$ squarefree, building on previous works of Yamauchi, Skoruppa and Zagier, Martin gave a formula~\cite[Equation~(1.6)]{mar} for the trace of the Atkin-Lehner operator $W_p$ on $\mathcal{S}_k(\Gamma_0(pN))$. In the simple case where $N=q>3$ is a prime and $p>3$, we have
$$\mathrm{tr}_{\mathcal{S}_k(\Gamma_0(pq))}W_p=\frac{1}{2}(-1)^k a(p,q) h(\QQ(\sqrt{-p})) (1+\left(\frac{\Delta_p}{q}\right))+\delta_{k,2},$$
where $a(p,q)$ is $1,4$ or $2$ for $p$ congruent to $1$ or $5$, $3$, $7$ modulo $8$ respectively, $h(\QQ(\sqrt{-p}))$ denotes the class number of $\QQ(\sqrt{-p})$ and $\Delta_p$ its discriminant, the Legendre symbol for $\Delta_p$ and $q$ is denoted by $\left(\frac{\Delta_p}{q}\right)$, and $\delta_{k,2}$ is $1$ if $k=2$ and $0$ otherwise.

By simple algebraic manipulations, we have the following
\begin{cor}
\label{formula_dim}
Let $\epsilon$ be an eigenvalue of the Atkin-Lehner operator $W_p$ and set $d:=\dim\mathcal{S}_k(\Gamma_0(pN))^{p-\mathrm{new}}$. Then
$$d_\epsilon=\frac{1}{2}\left(d+\epsilon\; \mathrm{tr}_{\mathcal{S}_k(\Gamma_0(pN))}W_p\right)$$
\end{cor}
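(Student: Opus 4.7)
The plan is to exploit the fact that $W_p$ is an involution (so $W_p^2=\mathrm{id}$) on the space $\mathcal{S}_k(\Gamma_0(pN))^{p-\mathrm{new}}$, which forces its eigenvalues to be $\pm 1$ and yields a diagonalization.

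First I would write the $p$-new space as the orthogonal sum of its two Atkin-Lehner eigenspaces,
\[
\mathcal{S}_k(\Gamma_0(pN))^{p-\mathrm{new}}=\mathcal{S}_k(\Gamma_0(pN))^{p-\mathrm{new},+}\oplus\mathcal{S}_k(\Gamma_0(pN))^{p-\mathrm{new},-},
\]
so that $d=d_{+}+d_{-}$. Since $W_p$ acts as $\epsilon\cdot\mathrm{id}$ on each summand, the trace of $W_p$ on $\mathcal{S}_k(\Gamma_0(pN))^{p-\mathrm{new}}$ is $d_{+}-d_{-}$.

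Next I would invoke the previous proposition to replace this trace by the trace on the full space $\mathcal{S}_k(\Gamma_0(pN))$, giving
\[
d_{+}-d_{-}=\mathrm{tr}_{\mathcal{S}_k(\Gamma_0(pN))}W_p.
\]
Combining with $d_{+}+d_{-}=d$ and solving the $2\times 2$ linear system in $d_{+},d_{-}$ then produces
\[
d_{\epsilon}=\tfrac{1}{2}\bigl(d+\epsilon\cdot\mathrm{tr}_{\mathcal{S}_k(\Gamma_0(pN))}W_p\bigr),
\]
for $\epsilon\in\{+1,-1\}$, which is exactly the claimed formula.

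There is no real obstacle here: the proposition providing $\mathrm{tr}_{\mathcal{S}_k(\Gamma_0(pN))^{p-\mathrm{new}}}W_p=\mathrm{tr}_{\mathcal{S}_k(\Gamma_0(pN))}W_p$ does the heavy lifting, and the rest is a one-line eigenspace computation for an involution. The only point worth flagging is that we are tacitly using that $W_p$ is diagonalizable with eigenvalues in $\{\pm 1\}$, which follows from $W_p^2=\mathrm{id}$ together with $\mathrm{char}=0$.
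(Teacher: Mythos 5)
Your argument is correct and is exactly what the paper means by ``simple algebraic manipulations'' following the trace proposition: decompose into the $\pm 1$ eigenspaces so that $d=d_{+}+d_{-}$ and $\mathrm{tr}_{\mathcal{S}_k(\Gamma_0(pN))^{p-\mathrm{new}}}W_p=d_{+}-d_{-}$, then substitute the proposition and solve. No discrepancy with the paper's (implicit) proof.
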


Let us remark that the dimension of $\mathcal{S}_k(\Gamma_0(pN))^{p-\mathrm{new}}$ can be obtained by recursively computing the dimension of $\mathcal{S}_k(\Gamma_0(pN))^{p-\mathrm{old}}$.  In Table~\ref{table:0} there are dimensions of $\mathcal{S}_k(\Gamma_0(pN))^{p-\mathrm{new}}$ and of $\mathcal{S}_k(\Gamma_0(pN))^{p-\mathrm{new}, \epsilon}$ for some levels and weights, where, for simplicity, we set $\mathrm{Tr}:=\mathrm{tr}_{\mathcal{S}_k(\Gamma_0(pN))}W_p$.
\setcounter{table}{-1}
% \begin{center}
\begin{table}[h]
\begin{tabular}{c|c|c|c|c|c|c}
$p$ & $N$ & $k$ & $\mathrm{Tr}$ &$d$ & $d_{+1}$ & $d_{-1}$ \\
\hline
$2$ & $1$ & $20$ & $0$ & $2$ & $1$ & $1$ \\
$2$ & $1$ & $40$ & $1$ & $3$ & $2$ & $1$ \\
$3$ & $1$ & $20$ & $1$ & $3$ & $2$ & $1$ \\
$3$ & $1$ & $40$ & $0$ & $6$ & $3$ & $3$ \\
$13$ & $1$ & $20$ & $1$ & $19$ & $10$ & $9$ \\
$13$ & $1$ & $40$ & $1$ & $39$ & $20$ & $19$ \\
$13$ & $2$ & $20$ & $1$ & $57$ & $29$ & $28$ \\
$13$ & $2$ & $40$ & $1$ & $117$ & $59$ & $58$  \\
$7$ & $11$ & $200$ & $2$ &$1194$ & $598$ & $596$\\
$7$ & $11$ & $400$ & $2$ &$2394$ & $1198$ & $1196$ 
% $5$ & $3$ & $20$ & $2$ &$26$ & $14$ & $12$ \\
% $5$ & $3$ & $40$ & $2$ & $52$ & $27$ & $25$  
\end{tabular}
%\medskip
\caption{}%\small Some traces and dimensions}
\label{table:0}
\end{table}
% \end{center}

\section{Computing $\mathcal{Q}_{p,k}^{\epsilon}$}
\label{sec5}
The main difficulty in computing  $\mathcal{Q}_{p,k}^{\epsilon}$ is given by the partial derivatives, as shown in Theorem \ref{poly}. For the denominators, this is straightforward by Remark \ref{algo}, since we only use formal differentiation with respect to $t$. However, the computation of the numerators is more involved. 

In \cite[Lemma~3.10]{lau} Lauder explains a method using finite differences for the first derivative. The aim of this section is to generalize this method to derivatives of higher order.
The study of finite differences in this context is due to Gouv\^ea and Mazur, see \cite{gm}.

Let us first observe that it is enough to work only at classical points since, by definition, we have
$$\partial^n_\kappa P^\dagger(k,t)=\lim_{m \to \infty}\dfrac{\partial
    ^{n-1}_\kappa P^\dagger(k + (p-1)p^m, t) - \partial^{n-1}_\kappa P^\dagger(k,t)}{(p-1)p^m}.
$$
The finite differences used to approximate $\partial^n_\kappa P^\dagger(k,t)$ are defined as follows. Let $s_m=(p-1)p^m$ and set 
\[
\partial^n_\kappa P^\dagger(k,t)_m := s_m^{-n}\sum_{j=0}^n(-1)^j{n\,\choose j} P^\dagger(k + (n-j)s_m,t).
\]

\begin{thm}
\label{prec_deriv}
We have 
\[
\partial^{n}_\kappa P^\dagger(k,t)_{m+1} \equiv \partial^{n}_\kappa P^\dagger(k,t)_m \mod p^{m+1}.
 \]
Therefore, $\partial^{n}_\kappa P^\dagger(k,t)\equiv \partial^{n}_\kappa P^\dagger(k,t)_m \mod p^{m+1}$.
\end{thm}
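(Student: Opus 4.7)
The plan is to reduce to an explicit calculation on monomials in the weight-disc coordinate $w$. Since the operator $s_m^{-n}\sum_{j=0}^n(-1)^j\binom{n}{j}(\cdot)$ is linear and acts coefficient-by-coefficient in $t$ on $P^\dagger(\kappa,t)=1+\sum_{j\geq 1} b_j^\epsilon(w_\kappa)\,t^j$, it suffices to prove the analogous stabilization modulo $p^{m+1}$ for each $\ZZ_p$-valued function $F(\kappa)=b_j^\epsilon(w_\kappa)$ individually. Writing $b_j^\epsilon(w)=\sum_{l\geq 0} c_l w^l\in\ZZ_p[[w]]$ with $c_l\to 0$ and noting that every individual finite-difference term lies in $\ZZ_p$, linearity further reduces the problem to the single monomial $F_l(\kappa)=w_\kappa^l=(\gamma^k-1)^l$.

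For this $F_l$ I would compute directly. Setting $u_m:=\gamma^{s_m}-1$, the binomial theorem gives
\[
F_l(k+is_m)=\sum_{r=0}^l\binom{l}{r}(-1)^{l-r}\gamma^{kr}(1+u_m)^{ir}.
\]
After relabeling $i=n-j$ in the defining sum and applying the elementary identity $\sum_{i=0}^n(-1)^{n-i}\binom{n}{i}x^i=(x-1)^n$ with $x=(1+u_m)^r$, the $i$-sum collapses and yields
\[
\partial^n_\kappa F_l(k)_m=\sum_{r=0}^l\binom{l}{r}(-1)^{l-r}\gamma^{kr}\left(\frac{(1+u_m)^r-1}{s_m}\right)^{\!n}.
\]
Writing $(1+u_m)^r-1=\gamma^{rs_m}-1=\exp(rs_m\log\gamma)-1$ and expanding the $p$-adic exponential series, one finds $((1+u_m)^r-1)/s_m=r\log\gamma+s_m\cdot E_{m,r}$ with $E_{m,r}\in\ZZ_p$. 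A $p$-adic valuation count using $\log\gamma\in p\ZZ_p$ (for odd $p$) or $\log\gamma\in 4\ZZ_2$ (for $p=2$) then shows
\[
\left(\frac{(1+u_m)^r-1}{s_m}\right)^{\!n}\equiv (r\log\gamma)^n\pmod{p^{m+1}},
\]
so that
\[
\partial^n_\kappa F_l(k)_m\equiv (\log\gamma)^n\sum_{r=0}^l\binom{l}{r}(-1)^{l-r}\gamma^{kr}r^n\pmod{p^{m+1}}.
\]
The right-hand side is manifestly independent of $m$, which gives the first assertion for $F_l$, and a direct computation (using $\gamma^k=\exp(k\log\gamma)$) identifies it as the true derivative $\partial_\kappa^n F_l(k)$, which gives the consequence. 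The general statement then follows from the reductions of the first paragraph.

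The main technical obstacle I anticipate is the careful $p$-adic bookkeeping, particularly for $p=2$ and $p=3$, where the valuations of $\log\gamma$ and of $s_m$ differ slightly from the generic case; one must verify in each case that the error terms discarded in passing to $(r\log\gamma)^n$ have valuation at least $m+1$. The interchange of the two infinite sums (over $j$ and over $l$) with the finite-difference operator and the modular reduction is routine given the uniform $\ZZ_p$-integrality of every finite-difference term together with $c_l\to 0$.
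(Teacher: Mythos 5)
Your argument is correct, but it follows a genuinely different route from the paper. The paper works coefficient-by-coefficient in $t$ just as you do, but then compares level $m$ to level $m+1$ directly: it reduces the claim to showing that $\sum_j(-1)^j\binom{n}{j}b(k+(n-j)ps_m)-p^n\sum_j(-1)^j\binom{n}{j}b(k+(n-j)s_m)$ is a $\ZZ$-linear combination of the Gouv\^ea--Mazur difference functions $\delta_\nu(b,k)$ with $\nu\geq n+1$, which amounts to the polynomial identity that $(X^p-1)^n-p^n(X-1)^n$ vanishes to order at least $n+1$ at $X=1$, and then invokes the congruence $\delta_\nu(b,k)\equiv 0 \bmod p^{\nu(m+1)}$ from Gouv\^ea--Mazur as the arithmetic input; the second assertion is then dispatched with a one-line remark about finite differences approximating derivatives. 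You instead bypass that citation entirely: using only the $\ZZ_p[\![w]\!]$-structure of the coefficients (Coleman's Appendix I, which the paper also quotes), you compute the finite differences of the monomials $w_\kappa^l$ in closed form via $\gamma^{rs_m}=\exp(rs_m\log\gamma)$ and compare each $\partial^n_\kappa F_l(k)_m$ to an $m$-independent quantity that you identify as the exact derivative, which yields both assertions simultaneously and in effect reproves the special case of the Gouv\^ea--Mazur congruence that the paper imports; the price is the explicit valuation bookkeeping (your estimates for $v_p(\log\gamma)$ and $v_p(E_{m,r})$ do go through for all $p$, including $p=2,3$, with the errors having valuation at least $m+2$). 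One small inaccuracy in your routine step: an element of $\ZZ_p[\![w]\!]$ need not have $c_l\to 0$, and you do not need it — the convergence and the interchange of the sum over $l$ with the difference operator and with reduction mod $p^{m+1}$ follow from the facts that $v_p(w_{k'})\geq 1$ (resp.\ $\geq 2$ for $p=2$) uniformly over the finitely many weights $k+is_m$ involved, so that $v_p\bigl(\partial^n_\kappa F_l(k)_m\bigr)\geq l-nm$ tends to infinity with $l$, and that each term lies in $\ZZ_p$ by your closed formula; with that correction the reduction to monomials is sound.
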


\proof
Let $P^\dagger(k,t)=\sum_{i=0}^\infty b_i(k)t^i$. In the sequel we fix a choice of $i$ and write $b(k)=b_i(k)$. Then, for $s_m=(p-1)p^m$ it is enough to show
\[
\sum_{j=0}^n(-1)^j{n\,\choose j} b(k + (n-j)ps_m)=p^n \sum_{j=0}^n(-1)^j{n\,\choose j} b(k + (n-j)s_m) \mod p^{(n+1)(m+1)}
\]
We define the difference functions $\delta_\nu(b,k)$ recursively by
\[
\begin{aligned}
\delta_1(b,k)&:=b(k+s_m)-b(k),\\
\delta_\nu(b,k)&:=\delta_{\nu-1}(b,k+s_m)-\delta_{\nu-1}(b,k) \quad \text{for } \nu\geq 2.
\end{aligned}
\]
By \cite[Theorem 2]{gm}, we have $\delta_\nu(b,k)\equiv 0 \mod p^{\nu(m+1)}$. Thus, rewriting the above equation, it suffices to show that
\[
\sum_{j=0}^n(-1)^j{n\,\choose j} b(k + (n-j)ps_m)-p^n \sum_{j=0}^n(-1)^j{n\,\choose j} b(k + (n-j)s_m)
\]
is a $\ZZ$-linear combination of $\delta_\nu(b,k)$ for $\nu\geq n+1$. By definition, we have
\[
\delta_\nu(b,k)=\sum_{j=0}^\nu {\nu\,\choose j}(-1)^j b(k+(\nu-j)s_m).
\]
Let $X$ be a variable and write
\[
(X-1)^\nu=\sum_{j=0}^\nu {\nu\,\choose j}(-1)^j X^{\nu-j}.
\]
Then, after substituting $X^{l}$ for $b(k+ls_m)$, we just need to prove that the polynomial $R(X)=(X^p-1)^n-p^n(X-1)^n$ is a $\ZZ$-linear combination of $(X-1)^\nu$ for $\nu\geq n+1$ or equivalently, that $R(X)$ vanishes to order at least $n+1$ at $X=1$. Clearly, $R(X)$ vanishes to order at least $n$ at $X=1$. Moreover, we have
\[
\frac{R(X)}{(X-1)^n}=\left(\frac{X^p-1}{X-1}\right)^n-p^n=\left(1+X+\dots+X^{p-1}\right)^n-p^n.
\]
and by specializing to $X=1$, we see that $R(X)$ vanishes to order at least $n+1$. This concludes the proof of the first part of the theorem. The second part is an immediate consequence, since the higher derivatives of every function given by a power series are approximated by finite differences on a small neighbourhood.\endproof

\section{Computations}
\label{sec6}
\subsection{Overview of the code}
The algorithm implemented computes the polynomial $\mathcal{Q}_{p,k}^{\epsilon}$, defined in Section~\ref{sec3}.
The input of the algorithm are a prime $p$, a positive integer $N$ coprime to $p$, an even integer $k$ and a positive integer $m$, the desired precision. 

The code works in a straightforward manner following the steps below:
\begin{enumerate}[Step 1:]
\item Find the Atkin-Lehner decomposition of the space. We will decompose the space according to the eigenvalues of the Atkin-Lehner operator $W_p$, since we will work on each $\mathcal{S}_k(\Gamma_0(pN))^{p-\mathrm{new}, \epsilon}$ separately, where $\epsilon$ is one of such eigenvalues. In this step, we actually only compute the dimension $d_\epsilon$ of such eigenspaces. As shown in Section~\ref{sec4}, this computation can be done via Corollary~\ref{formula_dim}, Equation~(1.6) in \cite{mar} and by recursion.
\item Compute evaluations of the reverse characteristic series of the $U_p$ operator on overconvergent modular forms at the weights $\{k,k+s_m,\dots,k+d_\epsilon s_m\}$ to precision $(d_\epsilon+1) m+1$, where $s_m=(p-1)p^m$. We performed this step using algorithms of Lauder \cite{lau} and Vonk \cite{vonk}, see Remark \ref{algo}. 
\item For each subspace $\mathcal{S}_k(\Gamma_0(pN))^{p-\mathrm{new}, \epsilon}$ and $1\leq n \leq d_\epsilon$, compute $\partial_\bullet^n P^\dagger(k,t)$, the partial derivative with respect to $\bullet$ of $P^\dagger(\kappa,t)$ at the point $(k,t)$ viewed as an element in $\ZZ_p\{\!\{t\}\!\}$. By Theorem~\ref{prec_deriv}, Step 2 provides all data needed for the derivative in $\kappa$, which is the relevant piece. 
\item Build the quotient $\mathcal{Q}^\epsilon_{p,k}$ of both derivatives, if possible. If the precision is too low, this step cannot be performed. In this case, or if the resulting precision of the coefficients $c_n$ as in Theorem~\ref{poly} is less than $m$, restart Step 2 with $m$ replaced by $m+10$.

%\item Build the quotient of both derivatives, if possible. If the precision is too low, this step cannot be performed, and the algorithm restarts with higher %precision $m$. 
%\item Compute the coefficients $c_n$ as in Theorem~\ref{poly}, and return $\mathcal{Q}_{p,k}^{\epsilon}$ .
\end{enumerate}

Some examples of the output are presented in the next subsection. Building from the output of the algorithm, we analyzed the distribution of (the slopes of) $\mathcal{L}$-invariants for increasing weight. We tabled the results obtained in Subsection~\ref{sectable} and we formulated some speculations, based on the data collected, in Subsection~\ref{secspec}.

\begin{rem}
It should be pointed out that, once the reverse characteristic series of $U_p$ is obtained, our algorithm is independent of the work of Lauder and Vonk, and can thus be combined with different methods of computing the series. The main bottleneck of our computations is the $p$-adic precision to which the coefficients $c_n$ of $\mathcal{Q}_{p,k}^{\epsilon}$ have to be computed. As shown in Theorem \ref{prec_deriv}, this precision depends on the dimension $d_\epsilon$ of the relevant space of modular forms. This leads to work with very large matrices of $U_p$ on overconvergent modular forms in the algorithms of Lauder and Vonk, making even the computation of the characteristic series rather time-consuming.
\end{rem}

\subsection{Examples}
All of the following examples were computed using an Intel$^{\tiny{\textregistered}}$
Xeon$^{\tiny{\textregistered}}$ E5-2650 v4 CPU processor with 512 GB of RAM memory.
\begin{ex}
Let us consider the space $\mathcal{S}_4(\Gamma_0(6))$. We have
\[\dim\mathcal{S}_4(\Gamma_0(6))^{p-\text{new},\epsilon} > 0 \quad \text{for} \quad  (p,\varepsilon)\in\{(2,1), (3, 1)\}.\]
In both cases the space is one-dimensional, i.e. the polynomial $\mathcal{Q}^\varepsilon_{p,4}(X)\in \mathbb{Q}_p[X]$ is given by a linear factor with the $\mathcal{L}$-invariant of the unique newform in $\mathcal{S}_4(\Gamma_0(6))^{p-\text{new}}$ as a zero. Modulo $2^{21}$, respectively  $3^{21}$, we have
\begin{align*}
\mathcal{Q}^{+1}_{2,4}(X) &= X + 94387\cdot2 \\
&= X - (2 + 2^3 + 2^4 + 2^7 + 2^9 + 2^{10} + 2^{11} + 2^{12} + 2^{16} + 2^{18} + 2^{19} + 2^{20}), \\
\mathcal{Q}^{+1}_{3,4}(X) &= X - 41502709 \\
&= X - (1 + 3^2 + 2\cdot3^7 + 3^8 + 2\cdot3^9 +2\cdot3^{13} + 2\cdot3^{14} + 2\cdot3^{15}).
\end{align*}
The latter example was also computed in \cite{tei90} and in \cite{pg}. The duration of the above computations was $1.8$ and $5.6$ seconds respectively.
\end{ex}
\begin{ex}
Let $p=7, N=11$ and $k=2$. The space $\mathcal{S}_{2}(\Gamma_0(77))^{7-\text{new},\epsilon}$ is two-dimensional for $\varepsilon = +1$ and three-dimensional for $\varepsilon = -1$. The coefficients of the following polynomials have been reduced modulo $7^{21}$.
\begin{align*}
\mathcal{Q}^{+1}_{7,2}(X) &= X^2 + 526982521374955003\cdot 7\cdot X + 192454376115114681\cdot 7^2 \\
\mathcal{Q}^{-1}_{7,2}(X) &= X^3 + 222104449450143105\cdot 7\cdot X^2 \\& \quad - 103688086480269397\cdot 7^2\cdot X - 39596022807935252 \cdot 7^3 
\end{align*}
The computation required approximately 2 hours. The slopes can be computed to be $[1_2]$ and $[1_3]$ respectively.  

In this case, we have  $\mathcal{S}_{2}(\Gamma_0(77))^{7-\text{new},\epsilon}= \mathcal{S}_{2}(\Gamma_0(77))^{\text{new},\epsilon}$ for $\epsilon=\pm 1$. The space $\mathcal{S}_{2}(\Gamma_0(77))^{\text{new},+1}$ is spanned by two rational newforms $f_1$ and $f_2$. These correspond to the isogeny classes of elliptic curves with Cremona labels \texttt{77a} and \texttt{77c} and their $\mathcal{L}$-invariants are given by
\begin{align*}
\mathcal{L}_7(f_1)&=2\cdot7 + 3\cdot 7^2 + 4\cdot 7^3 + 7^4 + 7^5 + 4\cdot 7^7 + 7^9 + O(7^{10}),
\\
\mathcal{L}_7(f_2)&=4\cdot 7 + 2\cdot 7^2 + 6\cdot 7^3 + 5\cdot 7^4 + 6\cdot 7^5 + 6\cdot 7^6 + 3\cdot 7^7 + 3\cdot 7^8 + 7^9 + O(7^{10}).
\end{align*}
The $\mathcal{L}$-invariants attached to elliptic curves can be computed, for example, by a build-in method in terms of Tate-parameters in \texttt{sage}. In the two cases above, one first has to apply a quadratic twist, as the curves do not have split-multiplicative reduction at $7$. \par
The space $\mathcal{S}_{2}(\Gamma_0(77))^{\text{new},-1}$ is spanned by a rational newform $f$ and two newforms $g_1$ and $g_2$ with coefficient field $K=\QQ(\sqrt{5})$. The newform $f$ corresponds to the isogeny class of elliptic curves with Cremona label \texttt{77b}, which has split multiplicative reduction at $7$. Its $\mathcal{L}$-invariant is given by
\[
\mathcal{L}_7(f)=3\cdot 7 + 7^2 + 2\cdot 7^3 + 5\cdot 7^4 + 4\cdot 7^5 + 7^6 + 4\cdot 7^7 + 7^8 + 3\cdot 7^9 + O(7^{10}).
\]
If we divide the polynomial $ \mathcal{Q}^{-1}_{7,2}(X) $ by $(X-\mathcal{L}_7(f))$, we thus obtain (modulo $7^{11}$)
\begin{align*}
(X-\mathcal{L}_7(g_1))\cdot (X-\mathcal{L}_7(g_2))= X^2+ 225931960\cdot7\cdot X+
21342634\cdot 7^2.
\end{align*}
This polynomial is in fact irreducible over $\QQ_7$, showing that the $\mathcal{L}$-invariant $\mathcal{L}_7(g_1)$ generates the quadratic unramified extension of $\QQ_7$, and similarly for $\mathcal{L}_7(g_2)$. We have computed the coefficient fields $\QQ_{g_1,7}$ and $\QQ_{g_2,7}$  and obtained that $\QQ_{g_i,7}=\QQ_7(\mathcal{L}_7(g_i))$ for $i=1,2$. This is a first example of forms of  higher level ($N=11$), where Chenevier's description (for $N=1$) of the local coefficient field, mentioned in the introduction, still holds.
\end{ex}

\subsection{Observations collected}
\label{secspec}
%$\,$\\
Given a positive even integer $k$, we denote by $\nu_\mathcal{L}^{\epsilon}(k,p,N)$ the finite sequence of slopes of the $p$-adic $\mathcal{L}$-invariants attached to forms in $\mathcal{S}_k(\Gamma_0(pN))^{p-\mathrm{new},\epsilon}$ ordered in decreasing order, where $\epsilon$ is an eigenvalue of the Atkin-Lehner operator $W_p$ acting on $\mathcal{S}_k(\Gamma_0(pN))^{p-\mathrm{new}}$. Note that this differs from the tables in \cite{pg}, where the space is decomposed with respect to the Atkin-Lehner involution at $N$. %When say a slope $\alpha$ is \emph{lower} than a slope $\beta$ if $\alpha>\beta$ as a picture of the Newton polygon suggests.

The observations collected are based on the data and on the tables presented in the next section.
\smallskip

We begin with the case $p=2$, that was extensively studied in \cite{pg}. There, several relations between slopes of different level and weights were conjectured for levels $N=3,5,7$. Table \ref{table:1} provides data for the case $N=1$, which is not accessible by the methods in \cite{pg}. The observations lead us to the following analogous conjecture.
\begin{con}[$p=2$]~
\label{con1}
\begin{itemize}
\item[(a)]For $k\in 2+4\ZZ$, $k\geq 10$ and $\epsilon\in\{\pm1\}$, we have
\[
\nu_\mathcal{L}^\epsilon(k,2,1)=\nu_\mathcal{L}^{-\epsilon}(k+6,2,1).
\]
\item[(b)] For every even integer $k$, the final $\min\{d_{+1},d_{-1}\}$ slopes in $\nu_\mathcal{L}^{+}(k,2,1)$ and $\nu_\mathcal{L}^{-}(k,2,1)$ agree.
\end{itemize}
\end{con}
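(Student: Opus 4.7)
The plan is to translate both parts of the conjecture into symmetries of the characteristic series $P^\dagger(\kappa,t)$. By Theorem~\ref{poly}, the slope multiset $\nu_\mathcal{L}^\epsilon(k,2,1)$ is the Newton slope multiset of the polynomial $\mathcal{Q}^\epsilon_{2,k}(X)$, whose coefficients are ratios of partial derivatives of $P^\dagger$ at the classical point $(\kappa,t)=(k,(a^\epsilon_{2,k})^{-1})$. Thus any equality of slope multisets must come from a suitable relation between the derivative data of $P^\dagger$ at the corresponding classical points on the two sides.

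For part~(a), first I would verify that both sides have the same cardinality, i.e.\ $d_\epsilon(k)=d_{-\epsilon}(k+6)$, using Corollary~\ref{formula_dim} together with Martin's trace formula at $p=2$, $N=1$. Next, a direct computation gives $(a^\epsilon_{2,k})^{-1}=-8\cdot(a^{-\epsilon}_{2,k+6})^{-1}$, so I would search for a functional equation of the form
\[
P^\dagger\bigl(\sigma(\kappa),\,-t/8\bigr)\equiv u(\kappa,t)\cdot P^\dagger(\kappa,t)\pmod{2^M},
\]
in $\ZZ_2[\![w]\!]\{\!\{t\}\!\}$, where $\sigma$ is the automorphism of the even weight disk sending $w_k\mapsto w_{k+6}$ (i.e.\ $w\mapsto 5^6(1+w)-1$, for $\gamma=5$), where $u$ is a nonvanishing power series, and where $M$ grows to infinity with the precision required. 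Such a relation would send the classical point $(k,(a^\epsilon_{2,k})^{-1})$ to $(k+6,(a^{-\epsilon}_{2,k+6})^{-1})$ and make all relevant partial derivative ratios agree, forcing the Newton polygons of $\mathcal{Q}^\epsilon_{2,k}$ and $\mathcal{Q}^{-\epsilon}_{2,k+6}$ to coincide. For part~(b), the analogous strategy is to exhibit, whenever $d_{+1}\neq d_{-1}$, an explicit slope-preserving injection of the smaller Atkin-Lehner component into the larger one at the same weight, matching precisely the trailing slopes of the two multisets.

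The main obstacle is that no such functional equation, nor a geometric involution of the Buzzard-Kilford $(p=2,N=1)$ eigencurve realising the combined weight shift by $6$ and the Atkin-Lehner sign flip, is currently available. The shift by $6$ is suggestive of multiplication by a fixed weight $6$ form such as $E_6$, but such twists alter the Hecke eigensystem in ways that do not obviously preserve $\mathrm{dlog}(a_2(\kappa))$ across the two families. An indirect approach would exploit the explicit Buzzard-Kilford slope description near the boundary of the weight disk to reduce both conjectures to identities of rational functions in $w$ and $t$ that can be verified computationally to high precision; upgrading such a finite verification to a rigorous proof would require an effective bound on the complexity of these identities, which is the true heart of the problem.
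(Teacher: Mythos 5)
There is no proof of this statement to compare against: in the paper, Conjecture~\ref{con1} is exactly that --- a conjecture. The authors support it only by the computations reported in Table~\ref{table:1} (verification up to $k=70$) and by analogy with the conjectures of \cite{pg}; no theoretical argument is offered. So your proposal has to be judged as an attempted proof of an open statement, and as such it has a genuine gap --- indeed you identify it yourself. The decisive ingredient, the functional equation
\[
P^\dagger\bigl(\sigma(\kappa),-t/8\bigr)\equiv u(\kappa,t)\cdot P^\dagger(\kappa,t),
\]
equivalently an involution of the $p=2$ eigencurve combining the weight shift by $6$ with the Atkin--Lehner sign flip, is postulated rather than constructed, and no candidate mechanism is proposed (your own remark that twisting by a weight $6$ form does not visibly preserve $\operatorname{\mathrm{dlog}}(a_2(\kappa))$ already shows the most natural guess fails). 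Part~(b) fares worse: ``exhibit a slope-preserving injection of the smaller Atkin--Lehner component into the larger one'' is a restatement of the claim, not a step toward it.

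Even the auxiliary steps are not carried out. The cardinality identity $d_\epsilon(k)=d_{-\epsilon}(k+6)$ is plausible from Corollary~\ref{formula_dim}, but note that the trace formula displayed in the paper is stated for $N=q>3$ prime and $p>3$, so the $p=2$, $N=1$ case requires the general form of Martin's formula and an actual computation of how $\mathrm{tr}\,W_2$ behaves under $k\mapsto k+6$. Likewise, granting the functional equation, one must check that the ratios $\partial^n_\kappa P^\dagger/\partial^n_t P^\dagger$ entering the coefficients $c_n$ of Theorem~\ref{poly} transform correctly: the substitution $t\mapsto -t/8$ and the unit $d\sigma/dw$ introduce powers of $2$ that have to cancel against the factor $(a^{\epsilon}_{2,k})^n$ versus $(a^{-\epsilon}_{2,k+6})^n$, and the nonvanishing factor $u$ contributes to the derivatives as well; none of this is verified. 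Finally, the proposed fallback via the Buzzard--Kilford description is misdirected: that result describes $U_2$-slopes near the boundary of the weight disc, whereas the $\mathcal{L}$-invariant data here is extracted from derivatives at classical weights in the interior, so it yields no identity of the required kind. In short, your text is a reasonable research programme and is consistent with the paper's treatment of the statement as conjectural, but it does not constitute a proof, and the paper contains none either.
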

Table \ref{table:1} verifies this statement up to $k=70$ computationally.  Our observations is in line with similar observations in \cite{pg}. The table also provides the missing data for \cite[Conjecture 5.6 (ii)]{pg} stating that the slopes appearing in level $7$ are the union of (two copies) of the slopes in levels $1$ and $3$.\par 
\vspace{\baselineskip}
The case $p=3$ and $N=1$ is treated in Table \ref{table:2}. Again we observe relations between various different slopes that do not yet have any theoretical interpretation. We collect our observations in the following conjecture.
\begin{con}[$p=3$]~
\label{con2}
\begin{itemize}
\item[(a)] For $k\in 2+6\ZZ$, $k\geq 8$ and $\epsilon\in\{\pm1\}$, we have
\[
\nu_\mathcal{L}^\epsilon(k,3,1)=\nu_\mathcal{L}^{\epsilon}(k+4,3,1).
\]
\item[(b)] For every even integer $k$, the final $\min\{d_{+1},d_{-1}\}$ slopes in $\nu_\mathcal{L}^{+}(k,3,1)$ and $\nu_\mathcal{L}^{-}(k,3,1)$ agree.
\end{itemize}
\end{con}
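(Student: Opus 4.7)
The plan is to recast both parts of Conjecture~\ref{con2} as geometric statements on the Coleman-Mazur eigencurve $\mathcal{C}_1$ of tame level $1$ at $p=3$. Via the Greenberg-Stevens formula $\mathcal{L}_3(f) = -2\operatorname{dlog}(a_p(\kappa))|_{\kappa=k}$, each $\mathcal{L}$-invariant is the logarithmic derivative at a classical point of the function $a_p$ restricted to the Coleman family through $f$, so its slope $v_3(\mathcal{L}_3(f))$ is controlled by how $a_p(\kappa)$ varies along that family. After fixing an Atkin-Lehner sign $\epsilon$, the multiset $\nu_\mathcal{L}^\epsilon(k,3,1)$ is the multiset of valuations of these logarithmic derivatives on the $d_\epsilon$ Coleman families whose fibre at weight $k$ lies in $\mathcal{S}_k(\Gamma_0(3))^{3-\mathrm{new},\epsilon}$. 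Part~(a) then becomes a weight-$4$ periodicity statement on each sign component, while part~(b) becomes an overlap statement between the two sign components at a fixed weight.

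For part~(a), the structural theorems for $p=3$ at tame level~$1$ going back to Buzzard-Kilford and refined by Roe describe the slopes of $U_3$ on $\mathcal{S}^\dagger_\kappa(\Gamma_0(3))$ as essentially piecewise-linear functions of $\kappa$, organised into a few Newton-polygon components. I would first attempt to pin down the $3$-adic power series $a_i(\kappa)$ on each component in a neighbourhood of any even weight $k\geq 8$ with $k\equiv 2\pmod{6}$, and then use Theorem~\ref{prec_deriv} to show that the finite-difference approximations $\partial_\kappa P^\dagger(k,t)_m$ and $\partial_\kappa P^\dagger(k+4,t)_m$ agree modulo a sufficiently high power of $3$ once restricted to the slope-$(k-2)/2$ subspace of each sign component. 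Combined with the explicit expression for the coefficients $c_n$ of $\mathcal{Q}_{3,k}^\epsilon$ in Theorem~\ref{poly}, this would force the Newton polygons of $\mathcal{Q}_{3,k}^\epsilon(x)$ and $\mathcal{Q}_{3,k+4}^\epsilon(x)$ to agree and hence yield the claimed equality of slope multisets.

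For part~(b), the plan is to exploit Corollary~\ref{formula_dim}: the two sign components differ only by the surplus term $\epsilon\cdot\mathrm{tr}_{\mathcal{S}_k(\Gamma_0(3))}W_3$, which for $p=3$ and $N=1$ is bounded by a small explicit constant independent of $k$. Setting $d = \min(d_{+1},d_{-1})$, one would attempt to construct a slope-preserving bijection between $d$ of the $d_{+1}$ Coleman families on the $+$ side and $d$ of the $d_{-1}$ on the $-$ side, perhaps by pairing families with matching Gouv\^ea-Mazur slope data and showing that their first-order deformations coincide to the required $3$-adic precision. The main obstacle is that the $\mathcal{L}$-invariant records second-order information about a Coleman family, while the known structural results for the $3$-adic eigencurve control only leading-order slopes of $a_p(\kappa)$; transferring the observed periodicity and overlap from slopes of $U_3$-eigenvalues to slopes of their logarithmic derivatives requires either a precise form of the Bergdall-Pollack ghost conjecture at $p=3$ or a strengthening of the Buzzard-Kilford-Roe description to give explicit local power-series expansions of each $a_i(\kappa)$, neither of which is currently available.
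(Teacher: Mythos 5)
The statement you are trying to prove is Conjecture~\ref{con2}: in the paper it is not proved at all. It is formulated as a conjecture extracted from numerical data (Table~\ref{table:2}, which checks it for weights up to $k=50$), and the authors explicitly say that the observed relations between slopes ``do not yet have any theoretical interpretation.'' So there is no paper proof to compare against, and any submission claiming a proof would need to supply an argument that currently does not exist in the literature.

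Your proposal does not supply one; it is a research plan whose decisive steps are exactly the content of the conjecture. For part~(a), the claim that the finite-difference approximations $\partial_\kappa P^\dagger(k,t)_m$ and $\partial_\kappa P^\dagger(k+4,t)_m$ ``agree modulo a sufficiently high power of $3$ once restricted to the slope-$(k-2)/2$ subspace'' is not something Theorem~\ref{prec_deriv} can give you: that theorem only controls how fast the finite differences at a \emph{single} weight converge to the derivative there; it says nothing relating derivatives at $k$ and at $k+4$. The Buzzard--Kilford/Roe-type descriptions of the $3$-adic eigencurve you invoke control leading-order slopes of $U_3$-eigenvalues, whereas the $\mathcal{L}$-invariant is governed by $\operatorname{dlog}(a_p(\kappa))|_{\kappa=k}$, i.e.\ by first-order variation of $a_p$ along the family --- second-order data about the eigencurve that these results do not reach. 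For part~(b), the ``slope-preserving bijection'' between Coleman families of opposite Atkin--Lehner sign is asserted, not constructed, and the dimension count from Corollary~\ref{formula_dim} only matches cardinalities, not slopes; indeed the paper's own observation (discussed around Tables~\ref{table:5}--\ref{table:7}) that the matching pairs come from \emph{different} Galois orbits suggests no naive pairing mechanism is available. You acknowledge yourself that the needed inputs (a precise ghost-type statement at $p=3$, or explicit local expansions of each $a_i(\kappa)$) are unavailable; with those missing, the proposal is an outline of how one might someday attack the conjecture, not a proof of it.
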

Interestingly, the numbers $4$ and $6$ from Conjecture \ref{con1} (a) are essentially switched here: we have relations between $k$ and $k+4$ when considering $k \mod 6$. The same is true for the Atkin-Lehner sign, since here we always have relations between slopes of the same sign. While Conjecture \ref{con2} is very similar to Conjecture \ref{con1}, we should point out that here there are slopes that do not have (obvious) relations to other slopes. These are the slopes for $k\in 4+6\ZZ$. We still observe that for example the slopes for $k=10$ and $k=16$ only differ by $-1$. Similarly, the slopes for $k=22$ and $k=28$ are very similar. However, we are not able to formulate a precise conjecture.\par
\vspace{\baselineskip}
The data in Tables~\ref{table:3}, \ref{table:4} and \ref{table:5} shows various relations between slopes of different weights supporting conjectural connections between them, analogous to the conjectures presented above. 
Moreover, after possibly removing oldforms, the analogous observation as in part (b) of the Conjectures~\ref{con1} and~\ref{con2} is still present in the data presented in Tables~\ref{table:6} and \ref{table:7}. We are able to say even more, namely that observation (b) is a purely local phenomenon. Let for example $p=13$ and $k=10$, the corresponding space of modular forms consists of two Galois orbits of dimensions $4$ and $5$. Each of the orbits makes up one of the two columns in Table \ref{table:5}. Thus, the pairs in observation (b) arise from different orbits. Moreover, within each orbit (aside from the first slope) all slopes are distinct.

The data computed for $p=2$ and $N=3$ matches with the one presented in \cite{pg} and we observe again a relation between the slopes, see \cite[Conjecture~5.2]{pg}. In all cases, it is an interesting question to analyze the growth of the slopes with the weight $k$ further. However, at this stage, the collected data is not sufficient to make precise claims. We observe that, in general, almost all slopes are negative, thus providing evidence for the conjectureral statement  that all $\mathcal{L}$-invariants are non-zero in Remark \ref{nonzero}. It would also be interesting to separate the slopes with respect to $\mathrm{mod}\ p$ eigensystems. This is however not possible with our method. As this is only relevant for larger $p$ and $N$, this question becomes more interesting once more data in these cases is available.
%\enlargethispage{4\baselineskip}
% \pagebreak
\definecolor{bluegray}{rgb}{0.4, 0.6, 0.8}

\subsection{Tables}
\label{sectable}
We keep the notation of subsection~\ref{secspec}. Note that in the case $N>1$ our tables can include oldforms. Their slopes are indicated by %{\color{gray} grey} 
{\color{bluegray} blue} font. %When the multiplicity of a slope is larger than $1$, it is indicated with a lower index.

\begin{table}[!htb]
\centering
\begin{tabular}{c|c|r|r}
$k$ & $d$ & $\nu_\mathcal{L}^+(k,2,1)$ &  $\nu_\mathcal{L}^-(k,2,1)$  \\
\hline
$8$ & $1$ &  \texttt{0} & \\
\hline
$10$ & $1$ & & \texttt{-1} \\
\hline
$12$ & $0$ & & \\ 
\hline
$14$ & $2$ &  \texttt{-4} &  \texttt{-4} \\
\hline
$16$ & $1$ &  \texttt{-1} & \\
\hline
$18$ & $1$ & &  \texttt{-2} \\
\hline
$20$ & $2$ &  \texttt{-4} &  \texttt{-4} \\
\hline
$22$ & $2$ &  \texttt{-6} &  \texttt{-6} \\
\hline
$24$ & $1$ &  \texttt{-2} & \\
\hline
$26$ & $3$ &  \texttt{-7} &  \texttt{-2, ~-7} \\
\hline
$28$ & $2$ &  \texttt{-6}&  \texttt{-6} \\
\hline
$30$ & $2$ &  \texttt{-5} &  \texttt{-5} \\
\hline
$32$ & $3$ &  \texttt{-2, ~-7} &  \texttt{-7} \\
\hline
$34$ & $3$ &  \texttt{-7} &  \texttt{-3, ~-7} \\
\hline
$36$ & $2$ &  \texttt{-5} &  \texttt{-5} \\
\hline
$38$ & $4$ &  \texttt{-6, -10} &  \texttt{-6, -10} %\\
% \hline
\end{tabular}\quad \quad
\centering 
\begin{tabular}{c|c|r|r}
$k$ & $d$ & $\nu_\mathcal{L}^+(k,2,1)$ &  $\nu_\mathcal{L}^-(k,2,1)$  \\
\hline
$40$ & $3$ &  \texttt{-3,~ -7} & \texttt{-7} \\
\hline
$42$ & $3$ &  \texttt{-9} &  \texttt{-3,~ -9} \\
\hline
$44$ & $4$ &  \texttt{-6, -10} &  \texttt{-6, -10} \\
\hline
$46$ & $4$ &  \texttt{-4, -11} &  \texttt{-4, -11} \\
\hline
$48$ & $3$ &  \texttt{-3,~ -9} &  \texttt{-9} \\
\hline
$50$ & $5$ &  \texttt{-9, -12} &  \texttt{-3, ~-9, -12} \\
\hline
$52$ & $4$ &  \texttt{-4, -11} &  \texttt{-4, -11} \\
\hline
$54$ & $4$ &  \texttt{-7, -10} &  \texttt{-7, -10} \\
\hline
$56$ & $5$ &  \texttt{-3,~ -9, -12} &  \texttt{-9, -12} \\
\hline
$58$ & $5$ &  \texttt{-8, -12} &  \texttt{-3,~ -8, -12} \\
\hline
$60$ & $4$ &  \texttt{-7, -10} &  \texttt{-7, -10} \\
\hline
$62$ & $6$ &  \texttt{-6, -12, -12} &  \texttt{-6, -12, -12} \\
\hline
$64$ & $5$ &  \texttt{-3,~ -8, -12} &  \texttt{-8, -12} \\
\hline
$66$ & $5$ &  \texttt{-8, -12} &  \texttt{-4, ~-8, -12} \\
\hline
$68$ & $6$ &  \texttt{-6, -12, -12} & \texttt{-6, -12, -12} \\
\hline
$70$ & $6$ & \texttt{-7, -10, -16}  & \texttt{-7, -10, -16}   \\
% \hline
\end{tabular}
\medskip
\caption{\small $p=2, N=1$}
\label{table:1}
\end{table}

\begin{table}[!htb]
\centering
\begin{tabular}{c|c|r|r}
$k$ & $d$ & $\nu_\mathcal{L}^+(k,3,1)$ &  $\nu_\mathcal{L}^-(k,3,1)$  \\
\hline
$6$& $1$&  &    \texttt{1} \\
\hline
$8$& $1$&  \texttt{-1} &     \\
\hline
$10$& $2$&  \texttt{-2} &     \texttt{-2} \\
\hline
$12$& $1$&  \texttt{-1} &     \\
\hline
$14$& $3$&  \texttt{-4} &    \texttt{0,~-4} \\
\hline
$16$& $2$&  \texttt{-3} &     \texttt{-3} \\
\hline
$18$& $3$&  \texttt{-4} &    \texttt{0, ~-4} \\
\hline
$20$& $3$&  \texttt{-2,~ -4} &    \texttt{ -4} \\
\hline
$22$& $4$&  \texttt{-3,~ -6} &    \texttt{-3, ~-6} \\
\hline
$24$& $3$&  \texttt{-2,~ -4} &    \texttt{-4} \\
\hline
$26$& $5$&  \texttt{-5,~ -7} &     \texttt{-2, ~-5,~ -7} \\
\hline
$28$& $4$&  \texttt{-1,~ -6} &     \texttt{-2, ~-6} \\
\hline
$30$& $5$&  \texttt{-5,~ -7} &     \texttt{-2, ~-5, ~-7} \\
\hline
$32$& $5$&  \texttt{-2, ~-4,~ -9} &     \texttt{-4, ~-9} \\
\hline
$34$& $6$&  \texttt{-3, ~-6, -10} &     \texttt{-3, ~-6, -10} \\
\hline
$36$& $5$&  \texttt{-2, ~-4,~ -9} &     \texttt{-4,~ -9}\\
\hline
$38$& $7$&  \texttt{-4, ~-9, -11} &     \texttt{-1,~ -4, ~-9, -11}\\
\hline
$40$& $6$&  \texttt{-3, ~-7, -10} &    \texttt{-3, ~-7, -10} \\
\hline
$42$& $7$&  \texttt{-4, ~-9, -11} & \texttt{-1,~ -4,~ -9, -11}\\
\hline
$44$& $7$&  \texttt{0, ~-6,~ -8, -11} &   \texttt{-6,~ -8, -11}\\
\hline
$46$& $8$&  \texttt{-2, ~-7, -11, -11}&    \texttt{-2, ~-7, -11, -11} \\
\hline
$48$& $7$&  \texttt{0, ~-6, ~-8, -11} &   \texttt{-6, ~-8, -11}\\
\hline
$50$& $9$& \texttt{-5, ~-9, -11, -13}  &  \texttt{-1, ~-5, ~-9, -11, -13}
\end{tabular}
\medskip
\caption{\small $p=3, N=1$}
\label{table:2}
\end{table}

\begin{table}[!htb]
\centering
\begin{tabular}{c|c|r|r}
$k$ & $d$ & $\nu_\mathcal{L}^+(k,5,1)$ &  $\nu_\mathcal{L}^-(k,5,1)$  \\
\hline
4& 1 &  \texttt{0} & \\
\hline
6& 1 & &  \texttt{0} \\
\hline
8& 3 &  \texttt{0, ~-2} &  \texttt{-2} \\
\hline
10& 3 &  \texttt{-2} &  \texttt{2, ~-2} \\
\hline
12& 3 &  \texttt{-1,~ -2} &  \texttt{-2} \\
\hline
14& 5 & \texttt{-2,~ -4} &  \texttt{-1,~ -2,~ -4} \\
\hline
16& 5 &  \texttt{-1, ~-3, ~-4} &  \texttt{-3, ~-4} \\
\hline
18& 5 &  \texttt{-2, ~-4} &  \texttt{-1, ~-2, -~4} \\
\hline
20& 7 &  \texttt{-1, ~-2,~ -5, ~-6} &  \texttt{-2, ~-5,~ -6} \\
\hline
22& 7 &  \texttt{-2, ~-5,~ -6} &  \texttt{-1, ~-2,~ -5,~ -6} \\
\hline
24& 7 &  \texttt{-1, ~-2, ~-4, ~-7} &  \texttt{-2,~ -4,~ -7} \\
\hline
26& 9 &  \texttt{-3,~ -4,~ -7,~ -8} &  \texttt{-1, ~-3, ~-4, ~-7, ~-8} \\
\hline
28& 9 &  \texttt{-1, ~-2, ~-4, ~-7, -10} &  \texttt{-2, ~-4, ~-7, -10} 
\end{tabular}
\bigskip
\caption{\small $p=5, N=1$}
\label{table:3}
\end{table}

\begin{table}[!htb]
\centering
\begin{tabular}{c|c|r|r}
$k$ & $d$ & $\nu_\mathcal{L}^+(k,7,1)$ &  $\nu_\mathcal{L}^-(k,7,1)$  \\
\hline
4& 1 &  \texttt{0} &  \\
\hline
6& 3 &  \texttt{-1}& \texttt{ 0, -1} \\
\hline
8& 3 &  \texttt{0, -1} & \texttt{ -1}\\
\hline
10& 5 &  \texttt{-1, -3} &  \texttt{ 0, -1, -3} \\
\hline
12& 5 &  \texttt{0, -2, -3} & \texttt{ -2, -3}\\
\hline
14& 7 &  \texttt{-2, -3, -4} &  \texttt{2, -2, -3, -4} \\
\hline
16& 7 &  \texttt{-1, -2, -3, -4}  &  \texttt{ -2, -3, -4} \\
\hline
18& 9 &  \texttt{-2, -3, -5, -6} & \texttt{  -1, -2, -3, -5, -6} \\
\hline
20& 9 &  \texttt{0, -1, -3, -5, -6} &  \texttt{ -1, -3, -5, -6 }\\
\hline
22& 11 &  \texttt{-1, -4, -5, -6, -7} &   \texttt{-1, -1, -4, -5, -6, -7} 
% \\
% \hline
% 24& 11 &  $0, -1, -3, -5, -6, -8$ &  $ -1, -3, -5, -6, -8 $\\
% \hline
% 26& 13 &  $-2, -3, -5, -6, -8, -9$ &   \texttt{-1,~ -2,~ -3,~ -5, ~-6,~ -8,~ -9} 
\end{tabular}
\medskip
\caption{\small$p=7, N=1$}
\label{table:4}
\end{table}

\begin{table}[!htb]
\centering
\begin{tabular}{c|c|r|r}
$k$ & $d$ & $\nu_\mathcal{L}^+(k,13,1)$ &  $\nu_\mathcal{L}^-(k,13,1)$  \\
\hline
4 & 3 &  \texttt{0, ~0}  & \texttt{0} \\
\hline
6 & 5 &  \texttt{0, -1} &  \texttt{0, ~0, -1} \\
\hline
8 & 7 &  \texttt{0, ~0, -1, -2}  & \texttt{0, -1, -2} \\
\hline
10 & 9 &  \texttt{0, -1, -2, -3}&  \texttt{0, ~0, -1, -2, -3} \\
\hline
12 & 11 & \texttt{0,~ 0, -1, -2, -3, -4} & \texttt{0, -1, -2, -3, -4} \\
\hline
14 & 13 &  \texttt{0, -1, -2, -3, -4, -5} & \texttt{0, 0, -1, -2, -3, -4, -5} 
\end{tabular}
\medskip
\caption{\small$p=13$, $N=1$}
\label{table:5}
\end{table}

\begin{table}[!htb]
%\parbox{.45\linewidth}{
\centering
\begin{tabular}{c|c|r|r}
$k$ & $d$ & $\nu_\mathcal{L}^+(k,2,3)$ &  $\nu_\mathcal{L}^-(k,2,3)$  \\
\hline
$4$     &$1$& \texttt{1} & \\
\hline
$6$     &$1$&  & \texttt{0}  \\
\hline
$8$     &$3$& \texttt{{\color{bluegray}0}, {\color{bluegray}0}} & \texttt{-1} \\ 
\hline
$10$     &$3$& \texttt{0} & \texttt{{\color{bluegray}-1}, {\color{bluegray} -1}} \\ 
\hline
$12$     &$3$& \texttt{-1, -4} & \texttt{-4} \\ 
\hline
$14$     &$5$& \texttt{{\color{bluegray}-4}, {\color{bluegray}-4}} & \texttt{-1, {\color{bluegray}-4}, {\color{bluegray}-4}}\\ 
\hline
$16$     &$5$& \texttt{{\color{bluegray}-1}, {\color{bluegray}-1}, -4} & \texttt{-2, -4} \\ 
\hline
$18$     &$5$& \texttt{-1, -4} & \texttt{{\color{bluegray}-2}, {\color{bluegray}-2}, -4} \\ 
\hline
$20$     &$7$& \texttt{-2, {\color{bluegray}-4}, {\color{bluegray}-4}, -6} & \texttt{{\color{bluegray}-4}, {\color{bluegray}-4}, -6} \\ 
\hline
$22$     &$7$& \texttt{-4, {\color{bluegray}-6}, {\color{bluegray}-6}} & \texttt{-2, -4, {\color{bluegray}-6}, {\color{bluegray}-6}} \\ 
\hline
$24$     &$7$& \texttt{{\color{bluegray}-2}, {\color{bluegray}-2}, -6, -7} & \texttt{-2, -6, -7} 
\end{tabular}
\medskip
\caption{\small $p=2, N=3$}
%}
\label{table:6}
\end{table}

\begin{table}[!htb]
\centering
%\parbox{.45\linewidth}{
\begin{tabular}{c|c|r|r}
$k$ & $d$ & $\nu_\mathcal{L}^+(k,3,2)$ &  $\nu_\mathcal{L}^-(k,3,2)$  \\
\hline
$4$     &$1$& \texttt{0} & \\
\hline
$6$     &$3$& \texttt{-1} & \texttt{{\color{bluegray}1},~ {\color{bluegray}1}}  \\
\hline
$8$     &$3$& \texttt{{\color{bluegray}-1}, {\color{bluegray}-1}} & \texttt{0} \\ 
\hline
$10$     &$5$& \texttt{{\color{bluegray}-2}, {\color{bluegray}-2}} & \texttt{-1, {\color{bluegray}-2}, {\color{bluegray}-2}} \\ 
\hline
$12$     &$5$& \texttt{{\color{bluegray}-1} ,{\color{bluegray}-1}, -4} & \texttt{0, -4} \\ 
\hline
$14$     &$7$& \texttt{-1, {\color{bluegray}-4}, {\color{bluegray}-4}} & \texttt{{\color{bluegray}0}, ~{\color{bluegray}0}, {\color{bluegray}-4}, {\color{bluegray}-4}}\\ 
\hline
$16$     &$7$& \texttt{1, {\color{bluegray}-3}, {\color{bluegray}-3}, -4} & \texttt{{\color{bluegray}-3}, {\color{bluegray}-3}, -4} \\ 
\hline
$18$     &$9$& \texttt{-2, -4, {\color{bluegray} -4}, {\color{bluegray}-4}} & \texttt{{\color{bluegray}0}, ~{\color{bluegray}0}, -4, {\color{bluegray}-4}, {\color{bluegray}-4}} \\ 
\hline
$20$     &$9$& \texttt{{\color{bluegray}-2}, {\color{bluegray}-2}, -4, {\color{bluegray} -4}, {\color{bluegray}-4}} & \texttt{0, -4, {\color{bluegray} -4}, {\color{bluegray}-4}} \\ 
\hline
$22$     &$11$& \texttt{{\color{bluegray}-3}, {\color{bluegray}-3}, -4, {\color{bluegray}-6}, {\color{bluegray}-6}} & \texttt{-2, {\color{bluegray}-3}, {\color{bluegray}-3}, -4, {\color{bluegray}-6}, {\color{bluegray}-6}}\\
\multicolumn{4}{c}{}
\end{tabular}
\medskip
\caption{\small $p=3, N=2$}%}
\label{table:7}
\end{table}

% \begin{table}[!htb]
% \centering
% \begin{tabular}{c|c|c|c}
% $k$ & $d$ & $\nu_\mathcal{L}^+(k,11,1)$ &  $\nu_\mathcal{L}^-(k,11,1)$  \\
% \hline
% 2  &  1  &    &  $ -1$ \\
% \hline
% 4  &  1  &   $0$   &  \\
% \hline
% 6  &  3  &   $-1$   &  $ 0, -1$ \\
% \hline
% 8  &  5  &   $1, -1, -2 $  &  $ -1, -2$ \\
% \hline
% 10  &  7  &   $-1, -2, -3$   & $  0, -1, -2, -3$ \\
% \hline
% 12  &  7  &   $0, -1, -2, -3 $  &  $ -1, -2, -3$ \\
% \hline
% 14  &  9  &   $-1, -2, -3, -5 $  & $  0, -1, -2, -3, -5 $
% \end{tabular}
% \medskip
% \caption{\small$p=11, N=1$}
% \end{table}

\clearpage
\bibliographystyle{annotate}
\bibliography{bibtexLinv}

\begin{thebibliography}{BSDGP96}

\bibitem[AL70]{Wp}
A.~O.~L. Atkin and J.~Lehner.
\newblock Hecke operators on {$\Gamma \sb{0}(m)$}.
\newblock {\em Math. Ann.}, 185:134--160, 1970.


\bibitem[Bel10]{belacourse}
J.~Bella{\"i}che.
\newblock Eigenvarieties, families of galois representations, {$p$}-adic
  {$L$}-functions.
\newblock {\em Unpublished course notes}, 2010.


\bibitem[Bel12]{bela}
J.~Bella{\"i}che.
\newblock Critical {$p$}-adic {$L$}-functions.
\newblock {\em Inventiones mathematicae}, 189(1):1--60, 2012.


\bibitem[{Ber}17]{berg}
J.~{Bergdall}.
\newblock {Upper bounds for constant slope $p$-adic families of modular forms}.
\newblock {\em ArXiv e-prints}, August 2017.


\bibitem[BSDGP96]{transc}
K.~Barr\'{e}-Sirieix, G.~Diaz, F.~Gramain, and G.~Philibert.
\newblock Une preuve de la conjecture de {M}ahler-{M}anin.
\newblock {\em Invent. Math.}, 124(1-3):1--9, 1996.


\bibitem[Buz05]{questions_buzzard}
K.~Buzzard.
\newblock Questions about slopes of modular forms.
\newblock {\em Ast\'erisque}, (298):1--15, 2005.
\newblock Automorphic forms. I.


\bibitem[Buz07]{buz}
K.~Buzzard.
\newblock Eigenvarieties.
\newblock In {\em {$L$}-functions and {G}alois representations}, volume 320 of
  {\em London Math. Soc. Lecture Note Ser.}, pages 59--120. Cambridge Univ.
  Press, Cambridge, 2007.

\bibitem[Che08]{Chenevier}
G.~Chenevier.
\newblock Quelques courbes de {H}ecke se plongent dans l'espace de {C}olmez.
\newblock {\em J. Number Theory}, 128(8):2430--2449, 2008.


\bibitem[Col96]{class}
R.~F. Coleman.
\newblock Classical and overconvergent modular forms.
\newblock {\em Invent. Math.}, 124(1-3):215--241, 1996.


\bibitem[Col97]{cole}
R.~F. Coleman.
\newblock {$p$}-adic {B}anach spaces and families of modular forms.
\newblock {\em Invent. Math.}, 127(3):417--479, 1997.


\bibitem[Col10]{col10}
P.~Colmez.
\newblock Invariants {$\mathcal{L}$} et d\'eriv\'ees de valeurs propres de
  {F}robenius.
\newblock {\em Ast\'erisque}, (331):13--28, 2010.


\bibitem[CST98]{cst}
R.~F. Coleman, G.~Stevens, and J.~T. Teitelbaum.
\newblock Numerical experiments on families of {$p$}-adic modular forms.
\newblock In {\em Computational perspectives on number theory ({C}hicago, {IL},
  1995)}, volume~7 of {\em AMS/IP Stud. Adv. Math.}, pages 143--158. Amer.
  Math. Soc., Providence, RI, 1998.

\bibitem[GM93]{gm}
F.~Q. Gouv\^ea and B.~Mazur.
\newblock On the characteristic power series of the {$U$} operator.
\newblock {\em Ann. Inst. Fourier (Grenoble)}, 43(2):301--312, 1993.


\bibitem[GM18]{guimas}
X.~Guitart and M.~Masdeu.
\newblock Periods of modular $\mathrm{GL}_2$-type abelian varieties and
  $p$-adic integration.
\newblock {\em Experimental Mathematics}, 27(3):344--361, 2018.


\bibitem[{Gra}17]{pg}
P.~M. {Graef}.
\newblock {A control theorem for $p$-adic automorphic forms and Teitelbaum's
  $\mathcal{L}$-invariant}.
\newblock {\em ArXiv e-prints}, May 2017.


\bibitem[GS93]{gs}
R.~Greenberg and G.~Stevens.
\newblock {$p$}-adic {$L$}-functions and {$p$}-adic periods of modular forms.
\newblock {\em Invent. Math.}, 111(2):407--447, 1993.


\bibitem[Lau11]{lau}
A.~G.~B. Lauder.
\newblock Computations with classical and {$p$}-adic modular forms.
\newblock {\em LMS J. Comput. Math.}, 14:214--231, 2011.


\bibitem[Mar18]{mar}
K.~Martin.
\newblock Refined dimensions of cusp forms, and equidistribution and bias of
  signs.
\newblock {\em Journal of Number Theory}, 2018.


\bibitem[MTT86]{mtt}
B.~Mazur, J.~T. Tate, and J.~T. Teitelbaum.
\newblock On {$p$}-adic analogues of the conjectures of {B}irch and
  {S}winnerton-{D}yer.
\newblock {\em Invent. Math.}, 84(1):1--48, 1986.


\bibitem[Tei90]{tei90}
J.~T. Teitelbaum.
\newblock Values of {$p$}-adic {$L$}-functions and a {$p$}-adic {P}oisson
  kernel.
\newblock {\em Invent. Math.}, 101(2):395--410, 1990.


\bibitem[Von15]{vonk}
J.~Vonk.
\newblock Computing overconvergent forms for small primes.
\newblock {\em LMS J. Comput. Math.}, 18(1):250--257, 2015.


\end{thebibliography}
\end{document}